\DeclareMathOperator{\dom}{dom}
\newcommand{\R} {\ensuremath{\mathbb{R}}}
\newcommand{\C} {\ensuremath{\mathbb{C}}}
\renewcommand{\o}[1]{\overline{#1}}
\newcommand{\dq}{\overline{\partial}}
\newtheorem {satz} {Satz} [section]
\newtheorem {lem} [satz] {Lemma}
\newtheorem {defn} [satz] {Definition}
\newtheorem {thm} [satz] {Theorem}
\DeclareMathOperator{\dist}{dist}
\DeclareMathOperator{\supp}{supp}
\renewcommand{\theta}{\vartheta}
\title[Friedrichs' extension lemma with boundary values] 
{Friedrichs' extension lemma with boundary values and applications in complex analysis }
\author{J. Ruppenthal}
\address{Department of Mathematics, University of Wuppertal, Gau{\ss}str. 20, 42119 Wuppertal, Germany.}
\email{ruppenthal@uni-wuppertal.de}
\date{October 14, 2009}
\subjclass[2000]{32W05, 35F99, 47F05}
\keywords{Weak boundary values, generalized Stokes' formulas, Cauchy-Riemann operator}
\begin{document}

\begin{abstract} 
Let $Q$ be a first-order differential operator on a compact, smooth oriented Riemannian manifold
with smooth
boundary. Then, Friedrichs' extension lemma states that the minimal closed
extension $Q_{min}$ (the closure of the graph) and the maximal closed extension $Q_{max}$ 
(in the sense of distributions) of $Q$ in $L^p$-spaces ($1\leq p<\infty$) coincide. 
In the present paper, we show that the same is true for boundary values with respect to $Q_{min}$ and $Q_{max}$.
This gives a useful characterization of weak boundary values, particularly for $Q=\dq$
the Cauchy-Riemannn operator.
As an application, we derive the Bochner-Martinelli-Koppelman formula
for $L^p$-forms with weak $\dq$-boundary values.
\end{abstract}

\maketitle

\section{Introduction}

Let $D$ be a relatively compact domain in a Hermitian complex manifold
and $\dq: C^\infty_*(\o{D})\rightarrow C^\infty_*(\o{D})$ the Cauchy-Riemann operator
on smooth forms. For $1\leq p < \infty$, this operator can be considered
as a densely defined graph-closable operator on $L^p$-forms:
$$\dq: \dom(\dq)=C^\infty_*(\o{D}) \subset L^p_*(D)\rightarrow L^p_*(D)$$
Now then, the $\dq$-operator has various closed extensions. The two most important
are the minimal closed extension $\dq_{min}$ given by the closure of the graph
and the maximal closed extension $\dq_{max}$, i.e. the $\dq$-operator in the sense
of distributions. 
Whereas the two extensions coincide on smoothly bounded domains by Friedrichs'
extension lemma (see \cite{F}, \cite{Hoe}),
one has to be very careful when considering non-smooth domains.
Especially on regular sets in singular complex spaces, it is crucial to distinguish
the different closed extensions of the $\dq$-operator for they lead to
different Dolbeault cohomology groups (see e.g. \cite{BS}, \cite{P}, \cite{PS1} or \cite{PS2}).
It was realized that investigating the relation between the various extensions
is an essential and very fruitful (maybe even indispensable) step in understanding the $\dq$-equation on singular
complex spaces which has to be pursued (see also \cite{R2}).
Clearly, the difference between the closed extensions occurs at the boundary of the domain.
So, a first step is to study the boundary behavior 
of $\dq_{min}$ and $\dq_{max}$ on domains with smooth boundary
which we do in the present paper by deriving a Friedrichs' extension lemma with boundary values.

Let $D\subset\subset \C^n$ be a bounded domain with smooth boundary $bD$, and let
$f\in L_{0,q}^p(D)$ with $\dq f\in L^p_{0,q+1}(D)$ in the sense of distributions for $1\leq p<\infty$.
Then, we say that $f$ has weak $\dq$-boundary values $f_b\in L^p_q(bD)$ in the sense of distributions if
\begin{eqnarray}\label{eq:intro1}
\int_D \dq f\wedge \phi + (-1)^q \int f\wedge \dq \phi = \int_{bD} f_b\wedge \iota^*(\phi)
\end{eqnarray}
for all $\phi\in C^\infty_{n,n-q-1}(\o{D})$, where $\iota: bD\hookrightarrow \C^n$
is the embedding of the boundary.
Weak $\dq$-boundary values in the sense of distributions
are a classical subject of complex analysis (see Theorem \ref{thm:hapo}, for example)
and closely related to the investigation
of the so-called Hardy spaces (cf. \cite{Sk}).
Starting from results of Skoda \cite{Sk}, Harvey and Polking \cite{HaPo},
Schuldenzucker \cite{Sch} and Hefer \cite{He2},
there has been a considerable progress in the understanding of weak $\dq$-boundary values
by Hefer in \cite{He3},
where boundary values in the sense of distributions are compared to
boundary values which arise naturally in the application of integral operators.
This is interesting because boundary values
defined by restricting the kernel of an integral operator can often be estimated by direct
methods, whereas the abstractly given distributional boundary values are less tractable but
analytically interesting objects linked to the form on the interior of a domain.

However, in applications the definition of weak $\dq$-boundary values by means of the Stokes'
formula \eqref{eq:intro1} turns out to be a bit unhandy and it is more convenient to
have boundary values in the sense of approximation by smooth forms.
In fact, let $f\in \dom(\dq_{max})\subset L^p_{0,q}(D)$ with weak boundary values $f_b\in L^p_q(bD)$
according to definition \eqref{eq:intro1}, and let $r\in C^\infty(\C^n)$ be a smooth defining function for $D$.
Then we will show that there exists a sequence $f_j\in C^\infty_{0,q}(\o{D})$
such that
\begin{eqnarray}\label{eq:intro2}
f_j\rightarrow f \mbox{ in } L^p_{0,q}(D)\ ,\ \ \ \dq f_j \rightarrow \dq f \mbox{ in } L^p_{0,q+1}(D)
\end{eqnarray}
(the classical Friedrichs' extension lemma) and moreover
\begin{eqnarray}\label{eq:intro3}
f_j\wedge \partial r \rightarrow f_b \wedge \partial r \mbox{ on } bD \mbox{ in } L^p_q(bD),
\end{eqnarray}
i.e. $f$ has $\dq$-boundary values in the sense of approximation (Theorem \ref{thm:main2}).

This phenomenon is not restricted to the Cauchy-Riemann operator, but holds
for arbitrary differential operators of first order with smooth coefficients. So, it is more convenient
to adopt a more general point of view.
Let $M$ be a smooth, compact Riemannian manifold with smooth boundary, 
$E$ and $F$ Hermitian vector bundles over $M$, and
$Q: C^\infty(M,E) \rightarrow C^\infty(M,F)$
a differential operator of first order. Let $1\leq p<\infty$ and $f\in L^p(M,E)$.
We say that 
$f\in \dom(Q_{min}^p)$
if there exists a sequence $\{f_j\}\subset C^\infty(M,E)$ and a section $g\in L^p(M,F)$
such that
$$f_j\rightarrow f \mbox{ in } L^p(M,E)\ ,\ \ \ Qf_j\rightarrow g \mbox{ in } L^p(M,F),$$
and define $Q_{min}^p f:=g$.
The well-defined operator $Q_{min}^p$ is called the minimal extension of $Q$
because it is the closed extension of $Q$ to an operator \linebreak $L^p(M,E)\rightarrow L^p(M,F)$ 
with minimal domain of definition.
Its graph is simply the closure of the graph of 
$Q: C^\infty(M,E) \rightarrow C^\infty(M,F)$ in $L^p(M,E)\times L^p(M,F)$.
Let $\sigma_Q$ be the principal symbol of $Q$,
$\nu$ the outward pointing unit normal to $bM$, and $\nu^\flat$
the dual cotangent vector.
Then, we say that $f$ has boundary values with respect to $Q_{min}^p$
if there exists a sequence $\{f_j\}$ in $C^\infty(M,E)$ such that 
$\lim_{j \rightarrow \infty} f_j=f$ in $L^p(M,E)$,
$\lim_{j \rightarrow \infty} Qf_j = Q^p_{min} f$ in $L^p(M,F)$, and
a section $f_b\in L^p(bM,E|_{bM})$ such that
$$\lim_{j \rightarrow \infty} \sigma_Q(\cdot,\nu^\flat(\cdot))f_j|_{bM} 
= \sigma_Q(\cdot,\nu^\flat(\cdot))f_b\ \ \mbox{ in }\ L^p(bM,F|_{bM}).$$
In this case, we call $f_b$ weak $Q$-boundary values of $f$
with respect to $Q^p_{min}$ (i.e. in the sense of approximation).

Now, we draw our attention to the maximal closed extension of $Q$,
that is the extension of $Q$ in the sense of distributions.
We say that $f\in \dom(Q^p_{max})$ if $Qf=u\in L^p(M,F)$ in the sense of distributions,
and set $Q^p_{max} f:= u$ in that case. Here again, we can define weak $Q$-boundary values
with respect to $Q^p_{max}$.
We say that $f$ has weak $Q$-boundary values 
$f_b\in L^p(bM, E|_{bM})$ with respect to $Q^p_{max}$
(in the sense of distributions), if $f_b$ satisfies
the generalized Green-Stokes formula (cf. Theorem \ref{thm:stokes})
\begin{eqnarray*}
(Qu,\phi)_M - (u,Q^*\phi)_M = \frac{1}{i}\int_{bM} \langle \sigma_Q(x,\nu^\flat) u_b, \phi\rangle_{F_x}\ dS(x)
\end{eqnarray*}
for all $\phi\in C^\infty(M,F)$.

The main objective of the present paper is to compare both notions
of $Q$-boundary values. It is easy to see that $\dom(Q^p_{min}) \subset \dom(Q^p_{max})\subset L^p(M,E)$,
and that $Q^p_{min}$ is the restriction of $Q^p_{max}$ to $\dom(Q^p_{min})$.
Moreover, it is also clear that weak $Q$-boundary values in the sense of approximation
are weak $Q$-boundary values in the sense of distributions as well.
It is well-known that in fact $Q^p_{min}=Q^p_{max}$ on smooth, compact manifolds
with smooth boundary.
This result, due to Friedrichs (see \cite{F}, \cite{Hoe}), is usually called Friedrichs' extension lemma (Theorem \ref{thm:friedrichs}).
In this paper, we observe that the two notions of boundary values coincide as well (Theorem \ref{thm:main}).
One might call this Friedrichs' extension lemma with boundary values.
In the particular case of the Cauchy-Riemann operator $Q=\dq$, we obtain \eqref{eq:intro2}, \eqref{eq:intro3}.

The present paper is organized as follows:
in the next section, we recall the notion of weak $Q$-boundary values in the sense of distributions (Definition \ref{defn:bvalues})
which makes sense in view of the generalized Green-Stokes formula Theorem \ref{thm:stokes}.
In section \ref{sec:fel}, we recall the proof of the classical Friedrichs' extension lemma
as it is presented in \cite{LiMi} (relying on \cite{Hoe} which in turn cites \cite{F} and \cite{LP})
and prove Friedrichs' extension lemma with boundary values by a sophisticated choice of an approximating identitiy.
In section \ref{sec:dq}, we return to the Cauchy-Riemann operator by translating 
the results into the language of complex analysis in the particular case of the differential operator $Q=\dq$.
In the last section, we show how boundary values in the sense of approximation can be used in applications
by deriving the Bochner-Martinelli-Koppelman formula for forms with weak $\dq$-boundary values.

\section{Weak Boundary Values}

Let $M$ be a smooth, compact Riemannian manifold with smooth boundary, 
$E$ and $F$ Hermitian vector bundles over $M$, and
$$Q: C^\infty(M,E) \rightarrow C^\infty(M,F)$$
a differential operator of first order.
Let $\sigma_Q$ be the principal symbol of $Q$, and
$$Q^*: C^\infty(M,F)\rightarrow C^\infty(M,E)$$
its formal adjoint operator given by
$$(Qu,v)_M = \int_M \langle Qu,v\rangle_F\ dV_M = \int_M \langle u,Q^* v\rangle_E\ dV_M=(u,Q^*v)_M,$$
where one of the two sections $u\in C^\infty(M,E)$, $v\in C^\infty(M,F)$ has compact support in 
the interior of $M$. Let $dS$ be the induced volume element on the boundary
$b M$, $\nu$ the outward pointing unit normal to $bM$, and $\nu^\flat$
the dual cotangent vector.
Then, the generalized Green-Stokes formula reads as (see \cite{Ta}, Prop. 9.1):

\begin{thm}\label{thm:stokes}
Let $M$ be a smooth, compact Riemannian manifold with smooth
boundary, and $Q$ a first-order differential operator (acting on sections of Hermitian vector bundles). Then
$$(Qu,v)_M-(u,Q^*v)_M = \frac{1}{i}\int_{b M} \langle\sigma_Q(x,\nu^\flat)u,v\rangle_{F_x} \ dS(x)$$
for all sections $u\in C^\infty(M,E)$, $v\in C^\infty(M,F).$
\end{thm}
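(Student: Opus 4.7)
The plan is to reduce the identity to a purely local computation by a partition-of-unity argument and then verify it by integration by parts in a boundary coordinate chart. Cover $M$ by finitely many coordinate charts over which $E$ and $F$ admit smooth orthonormal frames, and choose a subordinate partition of unity. Because both sides of the claimed identity are $C^\infty(M)$-bilinear in $(u,v)$, it suffices to check it with $u$ and $v$ supported in a single common chart. For a chart disjoint from $bM$ the right-hand side vanishes and the left-hand side does too by the very definition of the formal adjoint for compactly supported interior sections, so only a boundary chart carries any content.

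In a boundary chart introduce boundary normal coordinates $(x',x_n)$ such that $M\cap U=\{x_n\ge 0\}$, $bM\cap U=\{x_n=0\}$, and $\partial/\partial x_n$ restricts along $bM$ to the \emph{inward} unit normal, so that $\nu=-\partial/\partial x_n$ and $\nu^\flat=-dx_n|_{bM}$; write $dV_M=h(x)\,dx$, so $dS=h(x',0)\,dx'$. In the orthonormal frames the operator becomes $Qu=\sum_{j=1}^n A_j(x)\partial_j u + B(x)u$ with smooth matrix-valued coefficients, and the principal symbol is $\sigma_Q(x,\xi)=i\sum_j A_j(x)\xi_j$. A direct computation yields $Q^*v=-\sum_j\partial_j(A_j^* v)+(\text{zeroth order})$ up to the metric density, where $A_j^*$ is the pointwise Hermitian adjoint. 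Integrating $(Qu,v)_M-(u,Q^*v)_M$ by parts term by term, the zeroth-order pieces cancel, the tangential derivatives $\partial_1,\ldots,\partial_{n-1}$ produce no boundary contribution because the common support is compact in those variables, and the only surviving term comes from the one-dimensional $x_n$ integration by parts on $[0,\infty)$, which gives
\begin{equation*}
(Qu,v)_M-(u,Q^*v)_M \;=\; -\int_{bM}\langle A_n(x)u,v\rangle_{F_x}\,dS(x).
\end{equation*}

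Plugging $\nu^\flat=-dx_n|_{bM}$ into the symbol gives $\sigma_Q(x,\nu^\flat)=-iA_n(x)$ along $bM$, so
\begin{equation*}
\tfrac{1}{i}\int_{bM}\langle\sigma_Q(x,\nu^\flat)u,v\rangle_{F_x}\,dS(x) \;=\; -\int_{bM}\langle A_n u,v\rangle_{F_x}\,dS(x),
\end{equation*}
matching the previous display. The main obstacle is not analytical---the whole computation collapses to a one-dimensional integration by parts---but rather bookkeeping: synchronising the sign of the outward normal on the half-space with the convention for the principal symbol (the factor $1/i$ on the right-hand side is exactly what compensates the $i$ in $\sigma_Q$), and checking that the metric density $h$ produces no spurious boundary contribution from the tangential directions.
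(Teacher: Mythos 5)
The paper does not give its own proof of Theorem \ref{thm:stokes}; it refers to Taylor, Prop.~9.1, and your argument is a correct reconstruction of exactly that standard proof (partition of unity, boundary normal coordinates, one-dimensional integration by parts in the normal variable, sign/symbol bookkeeping). One small imprecision worth flagging: the scalar-valued quantity $D(u,v):=(Qu,v)_M-(u,Q^*v)_M-\tfrac{1}{i}\int_{bM}\langle\sigma_Q(x,\nu^\flat)u,v\rangle\,dS$ is not ``$C^\infty(M)$-bilinear'' in a literal sense. What actually powers the localization is the symmetry $D(\chi u,v)=D(u,\chi v)$ for every $\chi\in C^\infty(M)$, which follows from the Leibniz identity $Q(\chi u)=\chi Qu+\tfrac{1}{i}\sigma_Q(\cdot,d\chi)u$ and its adjoint counterpart, combined with the fact that $D(u,v)=0$ whenever one of the two sections has compact support in the interior of $M$. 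With that reading, writing $u=\sum_\alpha\chi_\alpha u$ and inserting a second cutoff $\tilde\chi_\alpha$ equal to $1$ on $\operatorname{supp}\chi_\alpha$ reduces matters to a single boundary chart exactly as you intend, and the remaining local computation you carry out is correct.
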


Now, let $u\in L^1(M,E)$ and $f\in L^1(M,F)$. Then we say that $Qu=f$ in the sense
of distributions if
$$(u,Q^*\phi)_M = (f, \phi)_M$$
for all $\phi\in C^\infty(M,F)$ with compact support in the interior of $M$.
We can now give the definition of weak boundary values with respect to the
first-order differential operator $Q$:

\begin{defn}\label{defn:bvalues}
In the situation of Theorem \ref{thm:stokes}, let $u\in L^1(M,E)$
with $Qu\in L^1(M,F)$. Then $u$ has weak $Q$-boundary values 
$u_b\in L^p(bM, E|_{bM})$ if
\begin{eqnarray}\label{eq:bvalues}
(Qu,\phi)_M - (u,Q^*\phi)_M = \frac{1}{i}\int_{bM} \langle \sigma_Q(x,\nu^\flat) u_b, v\rangle_{F_x}\ dS(x)
\end{eqnarray}
for all $\phi\in C^\infty(M,F)$.
\end{defn}

This generalizes the notion of weak boundary values of functions
in the Sobolev space $H^{1,p}(M)$: Let $Q=d:\ C^\infty(M,\C) \rightarrow C^\infty(M,\C\otimes T^* M)$
be the exterior derivative.
Then, for all $1\leq p\leq \infty$, there is a unique continuous trace operator
$$T: H^{1,p}(M)=\{u\in L^p(M,\C): du\in L^p(M,\C\otimes T^* M)\}\rightarrow L^p(bM,\C)$$
such that $Tu$ satisfies \eqref{eq:bvalues} (cf. \cite{Alt}, A 6.6).
In general, weak $Q$-boundary values do not necessarily exist.

\section{Friedrichs' Extension Lemma}\label{sec:fel}

Again,
let $M$ be a smooth, compact Riemannian manifold with smooth boundary, 
$E$ and $F$ Hermitian vector bundles over $M$, and
$Q: C^\infty(M,E) \rightarrow C^\infty(M,F)$
a differential operator of first order (with $C^1$ coefficients). Let $1\leq p<\infty$.
Then, for $f\in L^p(M,E)$, we say that
$f\in \dom(Q_{min}^p)$
if there exists a sequence $\{f_j\}\subset C^\infty(M,E)$ and $g\in L^p(M,F)$
such that
$$f_j\rightarrow f \mbox{ in } L^p(M,E)\ ,\ \ \ Qf_j\rightarrow g \mbox{ in } L^p(M,F),$$
and define $Q_{min}^p f:=g$.
The operator $Q_{min}^p$ is uniquely defined, because
$$(g,h)_M = \lim_{j\rightarrow \infty} (Qf_j,h)_M = \lim_{j\rightarrow\infty} (f_j,Q^* h)_M= (f,Q^* h)_M$$
for all $h\in C^\infty(M,F)$ with compact support in the interior of $M$.
Moreover, we say that $f\in \dom(Q^p_{max})$, if $Qf=u\in L^p(M,F)$ in the sense of distributions,
and set $Q^p_{max} f:= u$ in that case. It is easy to see that
$$\dom(Q^p_{min}) \subset \dom(Q^p_{max})\subset L^p(M,E),$$ 
and $Q^p_{min}$ is the restriction of $Q^p_{max}$ to $\dom(Q^p_{min})$.
But, in our situation, also the converse is true (cf. \cite{LiMi}, Theorem V.2.6):

\begin{thm}{\bf (Friedrichs' Extension Lemma)}\label{thm:friedrichs}
Let $M$ be a smooth, compact Riemannian manifold with smooth
boundary, and $Q: C^\infty(M,E)\rightarrow C^\infty(M,F)$ 
a first-order differential operator (acting on sections of Hermitian vector bundles),
and $1\leq p<\infty$.
Then for any $f\in \dom(Q^p_{max})$ there exists a sequence $\{f_\epsilon\}$ in $C^\infty(M,E)$
such that $\lim_{\epsilon\rightarrow 0} f_\epsilon=f$ and 
$\lim_{\epsilon\rightarrow 0} Qf_\epsilon=Q^p_{max} f$
with respect to $L^p$-norms. Shortly this means that
$$Q^p_{min}=Q^p_{max}.$$
\end{thm}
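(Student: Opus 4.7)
The plan is to reduce to a local statement via a finite partition of unity subordinate to a covering by coordinate charts, and then handle the interior charts and the boundary charts separately by a \emph{translate-then-mollify} procedure. First I would pick $\{\chi_k\}$ subordinate to an atlas in which each boundary chart trivializes $M$ as an open piece of the half-space $\{x_n\geq 0\}\subset\R^n$ with $bM$ mapped to $\{x_n=0\}$, and local trivializations of $E$ and $F$. Since $Q$ has $C^1$ coefficients and is first order, the Leibniz identity shows that for each $k$ the section $\chi_k f$ lies in $\dom(Q^p_{max})$ with $Q(\chi_k f)=\chi_k Qf + \tfrac{1}{i}\sigma_Q(\cdot,d\chi_k)f \in L^p$. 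By linearity, it suffices to approximate each $\chi_k f$ by smooth sections in the graph norm.

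For the interior charts, where $\supp(\chi_k f)$ is compactly contained in the interior of $M$, the standard approach is to convolve with a smooth, compactly supported, symmetric approximate identity $\rho_\epsilon$ on $\R^n$. For $\epsilon$ small enough the convolution $\rho_\epsilon \ast (\chi_k f)$ is smooth, supported in $\mathring{M}$, and converges to $\chi_k f$ in $L^p$. The nontrivial point is the convergence $Q(\rho_\epsilon \ast (\chi_k f))\to \chi_k Qf + \tfrac{1}{i}\sigma_Q(\cdot,d\chi_k)f$ in $L^p$, which is exactly the content of the classical Friedrichs commutator lemma: for a first-order operator with $C^1$ coefficients, $[Q,\rho_\epsilon\ast]\, g \to 0$ in $L^p$ whenever $g, Qg\in L^p$. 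I would cite or reprove this by splitting the commutator into a piece coming from the principal symbol (handled by the symmetry of $\rho$ and the $C^1$ regularity of the coefficients) and a piece coming from the zeroth-order terms (trivial).

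For a boundary chart I would work in half-space coordinates where the outward unit normal $\nu$ is $-\partial_{x_n}$, extend $\chi_k f$ by zero across $\{x_n=0\}$ to a section on all of $\R^n$, and then translate inward: set $f^{(\delta)}(x):=(\chi_k f)(x+\delta e_n)$, so that $\supp f^{(\delta)}\subset\{x_n\geq \delta\}$. For $\epsilon<\delta$, the convolution $f_{\epsilon,\delta}:=\rho_\epsilon\ast f^{(\delta)}$ is smooth and supported in $\{x_n>0\}$, hence in $\mathring{M}$ after transferring back. Continuity of translation in $L^p$ (recall $1\leq p<\infty$) and $L^p$-approximation by mollifiers give $f_{\epsilon,\delta}\to \chi_k f$ in $L^p$ along a diagonal $\epsilon=\epsilon(\delta)\to 0$. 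Applying Friedrichs' commutator lemma to the translated section $f^{(\delta)}$, which also lies in $\dom(Q^p_{max})$ on $\R^n$, one obtains $Qf_{\epsilon,\delta}\to Q(\chi_k f)$ in $L^p$ as $\epsilon\to 0$ for each fixed $\delta$, while $Q(\chi_k f)^{(\delta)}\to Q(\chi_k f)$ in $L^p$ as $\delta\to 0$ by continuity of translation. Choosing $\epsilon(\delta)$ small enough produces the required approximating sequence.

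The main technical obstacle is the Friedrichs commutator lemma itself: controlling the commutator $[Q,\rho_\epsilon\ast]$ when the coefficients of $Q$ are only $C^1$ and when, in the boundary chart, the operator has been transported from its intrinsic form on $M$ to the half-space via the trivialization. Once the commutator estimate is in hand uniformly for the translated functions, the partition-of-unity bookkeeping, the diagonalization $(\epsilon,\delta)\to(0,0)$, and the gluing back to a global smooth section on $M$ are routine.
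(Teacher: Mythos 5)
Your overall plan matches the paper's: partition of unity, interior mollification, and one-sided smoothing at the boundary, all driven by the Friedrichs commutator estimate (the paper's Lemma \ref{lem:lp}). The paper bakes the one-sidedness directly into the Dirac sequence by requiring $\supp\phi\subset\subset\{y_1>0\}$; your translate-then-mollify is the same device unrolled into two steps. However, the boundary-chart step as you wrote it is internally inconsistent, and one of the two possible readings fails. With $f^{(\delta)}(x)=(\chi_k f)(x+\delta e_n)$ and $\supp(\chi_k f)\subset\{x_n\geq 0\}$, one has $\supp f^{(\delta)}\subset\{x_n\geq -\delta\}$, not $\{x_n\geq\delta\}$, and $f_{\epsilon,\delta}=\rho_\epsilon\ast f^{(\delta)}$ is not supported in $\{x_n>0\}$. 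That is actually harmless (the theorem asks for a sequence in $C^\infty(\o{M})$, not one with compact support in the interior), and this \emph{is} the correct direction of translation: for $\epsilon<\delta$, the convolution evaluated at $x_n\geq 0$ samples $\chi_k f$ only at points with last coordinate $>0$, so the jump of the zero-extension across $bM$ never enters. Had you instead translated by $-\delta e_n$ to match your stated support $\{x_n\geq\delta\}$, the plan would genuinely fail: $\rho_\epsilon\ast f^{(\delta)}$ then straddles the jump of the zero-extension at $\{x_n=\delta\}$ \emph{inside} the domain, and $\partial_n f_{\epsilon,\delta}$ carries a smeared surface term of $L^p$-size of order $\epsilon^{1/p-1}$, which does not tend to $0$. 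A related slip: $f^{(\delta)}$ is not in $\dom(Q^p_{max})$ on all of $\R^n$ (its zero-extension has a distributional jump at the edge of its support unless the trace of $\chi_k f$ vanishes there); what is true, and what you need, is that on the enlarged half-space $\{x_n>-\delta\}$ the section $f^{(\delta)}$ is a genuine translate of a $\dom(Q^p_{max})$-section, so the commutator estimate should be run there (with $C^1$-extended coefficients) and the conclusion restricted to $\{x_n\geq 0\}$. With these corrections your argument is sound and is essentially the one in the paper.
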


Let us recall the principles of the proof. Using a partition of unity,
it is enough to consider $U\subset\subset \R^n$ open with smooth boundary
and $Q: C^\infty(U)\rightarrow C^\infty(U)$.
So, let $f\in L^p(U)$ and $Qf=Q^p_{max} f\in L^p(U)$. Again, by the partition of unity argument,
one has to consider the following two cases:\hfill\vspace{2mm}\linebreak
{\bf 1.} $\supp(f) \subset\subset U$, or\hfill\vspace{1mm}\linebreak
{\bf 2.} $U=\{x\in\R^n: x_1<0\}$ and $\supp(f)\subset\subset \o{U}$.\hfill\vspace{2mm}\linebreak
For the first case, let $\phi\in C^\infty_{cpt}(B_1(0))$ with $\phi\geq 0$ and $\int\phi dx=1$,
where $dx$ is the Euclidean volume element. We call
$\phi_\epsilon(x):=\epsilon^{-n}\phi(x/\epsilon)$ a Dirac sequence, and
$$f_\epsilon := f * \phi_\epsilon$$
the convolution of $f$ with a Dirac sequence. It is well known that $f_\epsilon \rightarrow f$
in $L^p(U)$ for $\epsilon\rightarrow 0^+$. But the crucial observation is
\begin{lem}\label{lem:lp}
$$\|Q f_\epsilon - (Q f)* \phi_\epsilon\|_{L^p(U)} \lesssim \|f\|_{L^p(U)}.$$
\end{lem}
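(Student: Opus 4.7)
The plan is to localize, write $Q = \sum_{j=1}^n a_j(x)\partial_j + b(x)$ with $a_j, b \in C^1(\overline U)$, and decompose
\[
Qf_\epsilon - (Qf)*\phi_\epsilon = \sum_{j=1}^n \bigl( a_j\,\partial_j(f*\phi_\epsilon) - (a_j \partial_j f)*\phi_\epsilon \bigr) + \bigl( b(f*\phi_\epsilon) - (bf)*\phi_\epsilon \bigr).
\]
Each summand is a Friedrichs commutator, and the goal is to bound each of them by $C\|f\|_{L^p(U)}$ uniformly in $\epsilon$. Since $\supp f \subset\subset U$, for $\epsilon$ small enough $f_\epsilon$ is well-defined, compactly supported in $U$, and all integrations by parts below produce no boundary terms.

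The zero-order commutator $b\,(f*\phi_\epsilon) - (bf)*\phi_\epsilon$ is trivial: it is bounded in $L^p$-norm by $2\|b\|_\infty\|f\|_{L^p(U)}$ via Young's inequality, using $\|\phi_\epsilon\|_{L^1}=1$. The main work is in the first-order commutators. For fixed $j$, the key step is to integrate by parts in $y$ inside $(a_j\partial_j f)*\phi_\epsilon$, interpreting $\partial_j f$ distributionally and using that $f$ has compact support:
\[
(a_j\partial_j f)*\phi_\epsilon(x) = -\bigl((\partial_j a_j)f\bigr)*\phi_\epsilon(x) + \int a_j(y)\,f(y)\,(\partial_j\phi_\epsilon)(x-y)\,dy,
\]
where I used $\partial_{y_j}\phi_\epsilon(x-y) = -(\partial_j\phi_\epsilon)(x-y)$. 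Subtracting from $a_j(x)(f*\partial_j\phi_\epsilon)(x)$ gives
\[
a_j\,\partial_j f_\epsilon(x) - (a_j\partial_j f)*\phi_\epsilon(x) = \int [a_j(x)-a_j(y)]\,f(y)\,(\partial_j\phi_\epsilon)(x-y)\,dy + \bigl((\partial_j a_j)f\bigr)*\phi_\epsilon(x).
\]

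The remainder piece $((\partial_j a_j)f)*\phi_\epsilon$ is estimated trivially: $\|((\partial_j a_j)f)*\phi_\epsilon\|_{L^p} \le \|\partial_j a_j\|_\infty\|f\|_{L^p}$. The truly delicate piece, which is the main obstacle, is the integral $\int[a_j(x)-a_j(y)]f(y)(\partial_j\phi_\epsilon)(x-y)\,dy$. Here one balances the singularity $\|\partial_j\phi_\epsilon\|_{L^1} = O(\epsilon^{-1})$ against the vanishing factor $|a_j(x)-a_j(y)|$. Since $a_j\in C^1$, there is a uniform Lipschitz constant $L$, so on $\supp(\partial_j\phi_\epsilon)\subset B_\epsilon(0)$ we have $|a_j(x)-a_j(y)|\le L\epsilon$. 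Thus
\[
\Bigl|\int [a_j(x)-a_j(y)]\,f(y)\,(\partial_j\phi_\epsilon)(x-y)\,dy\Bigr| \le L\epsilon\,\bigl(|f|*|\partial_j\phi_\epsilon|\bigr)(x),
\]
and Young's inequality with $\|\partial_j\phi_\epsilon\|_{L^1} = \epsilon^{-1}\|\partial_j\phi\|_{L^1}$ produces an $L^p$-bound of $L\|\partial_j\phi\|_{L^1}\|f\|_{L^p}$, where the factor $\epsilon$ cancels the $\epsilon^{-1}$. Summing over $j$ and adding the zero-order term yields the claimed estimate.
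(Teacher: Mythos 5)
Your proof is correct and is the standard Friedrichs commutator argument; the paper itself does not spell out a proof of this lemma but defers to Lieb--Michel \cite{LiMi} for the classical Friedrichs argument, and your derivation is precisely that argument: decompose $Qf_\epsilon - (Qf)*\phi_\epsilon$ into the zero-order piece (trivial by Young) and the first-order commutators, integrate by parts to peel off $((\partial_j a_j)f)*\phi_\epsilon$ and a kernel term $\int[a_j(x)-a_j(y)]f(y)(\partial_j\phi_\epsilon)(x-y)\,dy$, and exploit the Lipschitz bound $|a_j(x)-a_j(y)|\lesssim\epsilon$ on the $\epsilon$-ball to cancel the $\epsilon^{-1}$ coming from $\|\partial_j\phi_\epsilon\|_{L^1}$. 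Note only that you wrote the proof for the interior case ($\supp f\subset\subset U$); for the boundary case of the paper's proof, the one-sided Dirac sequence with $\supp\phi\subset\{x_1>0\}$ makes the convolution and the integration by parts equally legitimate on the half-space, so your argument carries over verbatim, as the paper itself remarks.
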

It is now easy to complete the first case: Let $\delta>0$ and $\psi\in C^\infty_{cpt}(U)$
such that
$$\|f-\psi\|_{L^p(U)}<\delta.$$
Applying Lemma \ref{lem:lp} to $f-\psi$ yields:
$$\|Qf_\epsilon - (Qf)*\phi_\epsilon\|_{L^p(U)} 
\lesssim \delta + \|Q\psi_\epsilon - (Q\psi)*\phi_\epsilon\|_{L^p(U)}.$$
Choosing $\delta$ and $\epsilon$ arbitrarily small finishes this part of the proof.
The second case is treated by exactly the same procedure.
One only has to be a little careful when choosing the Dirac sequence $\phi_\epsilon$.
Here, let $\phi\in C^\infty_{cpt}(B_1(0))$ such that
$$\supp(\phi)\subset\subset \{x\in B_1(0): x_1>0\}.$$
Then $f_\epsilon$ is well defined on $\o{U}$, 
Lemma \ref{lem:lp} is still true and everything goes through as before.
That completes the proof of Theorem \ref{thm:friedrichs} as it is given in \cite{LiMi}.

\vspace{2mm}
We are now interested in the behavior of the sequence $\{f\epsilon\}$ on the boundary $bM$.
It is possible to extend Theorem \ref{thm:friedrichs} to Friedrichs' extension lemma
with boundary values:

\begin{thm}\label{thm:main}
In the situation of Theorem \ref{thm:friedrichs}, assume that $f\in \dom(Q^p_{max})$
has weak $Q$-boundary values $f_b\in L^p(bM, E|_{bM})$ in the sense of
Definition \ref{defn:bvalues}.
Then there exists a sequence $\{f_\epsilon\}$ in $C^\infty(M,E)$ such that 
$\lim_{\epsilon\rightarrow 0} f_\epsilon=f$ in $L^p(M,E)$,
$\lim_{\epsilon\rightarrow 0} Qf_\epsilon = Q^p_{max} f$ in $L^p(M,F)$ and
$$\lim_{\epsilon\rightarrow 0} \sigma_Q(\cdot,\nu^\flat(\cdot))f_\epsilon|_{bM} 
= \sigma_Q(\cdot,\nu^\flat(\cdot))f_b\ \ \mbox{ in }\ L^p(bM,F|_{bM}).$$
\end{thm}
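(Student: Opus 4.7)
The overall plan is to mimic the proof of the classical Friedrichs' extension lemma (Theorem \ref{thm:friedrichs}) with an approximating identity chosen so that the approximating sequence also captures the boundary values. The partition-of-unity step is identical to the classical proof, so I am immediately reduced to the two model cases, the first being $\supp(f)\subset\subset U$, where the boundary statement is vacuous, and the second being the half-space $U=\{x_1<0\}\subset\R^n$ with $\supp(f)\subset\subset\o{U}$. All the work is in the half-space case.

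In the half-space I would choose a product-form mollifier $\phi_\epsilon(y_1,y')=\epsilon^{-1}\rho(y_1/\epsilon)\cdot\epsilon^{-(n-1)}\psi(y'/\epsilon)$, where $\rho\in C^\infty_c((0,\infty))$ with $\int\rho=1$ and $\psi$ is a standard mollifier on $\R^{n-1}$. Because $\supp\phi_\epsilon\subset\{y_1>0\}$, the convolution $f_\epsilon:=f*\phi_\epsilon$ is smooth on all of $\o{U}$, and the classical proof (via Lemma \ref{lem:lp}) already gives $f_\epsilon\to f$ and $Qf_\epsilon\to Qf$ in $L^p$. The product structure is introduced only to handle the boundary. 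Writing $\sigma_Q(\cdot,\nu^\flat)=iA_1$ on $bU$, where $A_1$ is the coefficient of $\partial_{x_1}$ in $Q$, and letting $\tilde\psi_\epsilon(y'):=\int\phi_\epsilon(y_1,y')\,dy_1$ be the induced tangential mollifier, a direct calculation (together with the $C^1$-smoothness of the coefficients of $Q$) gives
$$A_1f_\epsilon(0,x')-(A_1f_b)*\tilde\psi_\epsilon(x')=\int\phi_\epsilon(y)\bigl[A_1f(-y_1,x'-y')-A_1f_b(x'-y')\bigr]\,dy+\mathrm{l.o.t.},$$
where the lower-order terms vanish in $L^p(bU)$ as $\epsilon\to 0$. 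Since $(A_1f_b)*\tilde\psi_\epsilon\to A_1f_b$ in $L^p(bU)$ and Minkowski's integral inequality applies to the remaining term after translation-invariance in $x'$, the theorem reduces to the slice convergence
$$\|A_1f(-\tau,\cdot)-A_1f_b\|_{L^p(\R^{n-1})}\longrightarrow 0\qquad(\tau\to 0^+).$$

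This slice convergence is the main obstacle, and it is where Definition \ref{defn:bvalues} is used essentially. My plan is to test \eqref{eq:bvalues} against $\phi(x)=\chi_\tau(x_1)h(x')$ with arbitrary $h\in C^\infty_c(\R^{n-1},F|_{bU})$ and $\chi_\tau\in C^\infty$ a cutoff equal to $1$ at $x_1=0$ and supported in $[-\tau,0]$. Expanding $Q^*\phi=-\chi_\tau'(x_1)A_1^*h+\chi_\tau(x_1)\cdot(\text{tangential terms})$, one observes that every bulk contribution whose $x_1$-factor is $\chi_\tau$ rather than $\chi_\tau'$ is an $L^1_{x_1}$-function integrated against a cutoff whose support shrinks to $\{0\}$, hence vanishes in the limit. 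What survives is
$$\int_{-\tau}^0\chi_\tau'(x_1)\,(A_1f(x_1,\cdot),h)_{x'}\,dx_1\longrightarrow(A_1f_b,h)_{bU},$$
which is weak convergence of $\chi_\tau'$-weighted normal averages of $A_1f$ to $A_1f_b$.

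The \emph{sophisticated} step, and the hardest technical point, is to upgrade this weak convergence to $L^p$-norm convergence of the actual slices. The plan is to identify the 1D profile $\rho_\epsilon$ of $\phi_\epsilon$ with $-\chi_\epsilon'$, so that the weighted averages whose weak convergence Green--Stokes delivers are exactly (up to tangential mollification by $\tilde\psi_\epsilon$) the boundary restrictions $A_1f_\epsilon|_{bU}$. Combining this with the $L^p$-equicontinuity in the tangential variable provided by $\tilde\psi_\epsilon$, a Kolmogorov--Riesz compactness argument renders the family $\{A_1f_\epsilon|_{bU}\}_\epsilon$ precompact in $L^p(bU)$, so that its weak limit $A_1f_b$ is also its strong limit. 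A final diagonalization argument then yields the sequence $\{f_\epsilon\}$ satisfying all three convergences simultaneously. This coupling between the normal profile of the mollifier and the normal cutoff appearing in the Green--Stokes test function is the ``sophisticated choice of approximating identity'' promised in the introduction.
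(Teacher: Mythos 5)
Your plan shares the paper's central idea, namely to build a mollifier whose normal profile coincides with the derivative of the normal cutoff in the Green--Stokes test function, so that the Green--Stokes formula expresses the mollified boundary restriction in terms of bulk integrals. But you diverge from the paper at precisely the step you flag as the hard one, and it is there that your proposal has a genuine gap. Your mollifier $\phi_\epsilon(y_1,y')=\epsilon^{-1}\rho(y_1/\epsilon)\,\epsilon^{-(n-1)}\psi(y'/\epsilon)$ has a single scale in both the normal and tangential directions. When one plugs the associated test function $\Phi^\epsilon_{x'}(t)=\psi_\epsilon(x'-t')\bigl(1-h(-t_1/\epsilon)\bigr)$ into Green--Stokes and decomposes $Q^*=-a_1\partial_{t_1}+Q'$, the bulk error coming from the tangential part $Q'$ is bounded by $\epsilon^{-1}\int_U|f|^p\mathbf{1}_{\{-2\epsilon<t_1<0\}}\,dt$. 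For a general $f\in L^p(U)$ this is $O(1)$ as $\epsilon\to 0$, not $o(1)$: take $f$ independent of $x_1$ near the boundary. The paper gets around this by decoupling the scales, choosing $\phi_\epsilon(x)=\psi_\epsilon(x')\,\partial_{x_1}h_{\tau(\epsilon)}(x_1)$ with $\tau(\epsilon)\le\epsilon$ chosen \emph{a posteriori}, via Lebesgue dominated convergence, so that $\epsilon^{-1}\int_U|f|^p\bigl(1-h_{\tau(\epsilon)}(-t_1)\bigr)\,dt\le\epsilon$. With the two-scale mollifier, the error terms are shown to vanish in $L^p(bU)$ directly, and no weak-to-strong upgrade is needed.

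Your fallback route---pass to weak convergence and then upgrade via Kolmogorov--Riesz---does not close as stated. Kolmogorov--Riesz requires a uniform $L^p(bU)$ bound and uniform $L^p$-equicontinuity of the family $\{A_1 f_\epsilon|_{bU}\}_\epsilon$, and neither is established by the tangential mollification alone, since the mollification scale $\epsilon$ shrinks simultaneously. In fact, the uniform $L^p(bU)$ bound is essentially what you are trying to prove, so invoking compactness is circular without it. The intermediate Minkowski reduction to the slice convergence $\|A_1 f(-\tau,\cdot)-A_1 f_b\|_{L^p}\to 0$ is also more than the hypothesis delivers: Definition \ref{defn:bvalues} only gives you $L^1$-averaged information about the slices, and the paper's proof deliberately avoids asking for slice convergence at all. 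The fix is the paper's: keep the tangential scale at $\epsilon$, shrink the normal scale $\tau(\epsilon)$ independently, and estimate the bulk error terms $\int_U(Qf)\Phi^\epsilon_{x'}\,dt$ and $\int_U f\,Q'\Phi^\epsilon_{x'}\,dt$ in $L^p(bU)$ directly by Lebesgue's theorem, which yields strong convergence without any compactness argument.
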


\begin{proof}
We copy the proof of Theorem \ref{thm:friedrichs}. One has to be
even more careful when choosing the Dirac sequence.
We only have to take a closer look at the second case. So, let
$U=\{x\in \R^n: x_1<0\}$, $\supp(f)\subset\subset \o{U}$
and 
$$\supp(f_b)\subset\subset bU=\{x\in\R^n: x_1=0\}.$$
Then
$$Q=\sum_{j=1}^n a_j(x)\frac{\partial}{\partial x_j} + b(x)\ ,\ \ 
Q^*=-\sum_{j=1}^n \left(a_j(x) \frac{\partial}{\partial x_j} + \frac{\partial a_j}{\partial x_j}(x)\right) + b(x),$$
and
\begin{eqnarray}\label{eq:bvalues1}
\int_U (Qf) \Phi dx - \int_U f (Q^* \Phi) dx = \int_{bU} a_1(0,x') f_b(x') \Phi(0,x') dx'
\end{eqnarray}
for all $\Phi\in C^\infty_{cpt}(\o{U})$ according to Definition \ref{defn:bvalues} with $x'=(x_2, ..., x_n)$.
We will use the decomposition
\begin{eqnarray}\label{eq:Q}
Q^* = -a_1(x)\frac{\partial}{\partial x_1} + Q'.
\end{eqnarray}
Now, let us choose the right Dirac sequence for our purposes.
Let $B'_1(0)$ be the unit ball in $\R^{n-1}$ and $\psi\in C^\infty_{cpt}(B'_1(0))$
with $\psi\geq 0$ and $\int \psi dx'=1$, where $dx'$ is the Euclidean volume element in $\R^{n-1}$,
$x'=(x_2, ..., x_n)$. For $\epsilon>0$ set $$\psi_\epsilon:=\epsilon^{-(n-1)}\psi(x'/\epsilon).$$
Therefore, it follows that 
\begin{eqnarray}\label{eq:bvalues2}
\lim_{\epsilon\rightarrow 0}\ \big(a_1(0,\cdot) f_b\big) * \psi_\epsilon 
=a_1(0,\cdot) f_b\ \ \mbox{ in }\ L^p(bU).
\end{eqnarray}
Moreover, let $h: \R \rightarrow [0,1]$
be a smooth function such that 
$$h(x_1) = \left\{\begin{array}{ll}
0,& \mbox{ for } x_1\leq 1,\\
1,& \mbox{ for } x_1\geq 2.\end{array}\right.$$
For $\tau>0$, set $h_\tau(x_1)=h(x_1/\tau)$.
Now, we define a Dirac sequence in $\R^n$:
$$\phi_\epsilon(x):= \psi_\epsilon(x') \frac{\partial h_{\tau(\epsilon)}}{\partial x_1}(x_1),$$
where $\tau(\epsilon)$ will be chosen later. At this point, we only require that $\tau(\epsilon)\leq \epsilon$.
Note that $\supp(\phi_\epsilon)\subset\subset \{x_1>0\}$.
Let
$$f_\epsilon:=f*\phi_\epsilon.$$
Then $f_\epsilon\rightarrow f$ and $Qf_\epsilon\rightarrow Qf$ as in the proof of Theorem \ref{thm:friedrichs}.
Because of \eqref{eq:bvalues2}, we only have to prove that
\begin{eqnarray}\label{eq:bvalues4}
\lim_{\epsilon\rightarrow 0}\ a_1(0,\cdot) f_\epsilon|_{bU} 
= \lim_{\epsilon\rightarrow 0}\ \big(a_1(0,\cdot) f_b\big)*\psi_\epsilon\ \ \mbox{ in }\ L^p(bU).
\end{eqnarray}
For $(0,x')\in bU$, we calculate:
\begin{eqnarray*}
\big(a_1(0,\cdot) f_b\big)*\psi_\epsilon (x')
&=& \int_{bU} a_1(0,t') f_b(t') \psi_\epsilon(x'-t') dt'\\
&=& \int_{bU} a_1(0,t') f_b(t') \psi_\epsilon(x'-t') \big(1-h_{\tau(\epsilon)}(0)\big)dt'\\
&=& \int_U \big(Qf\big) \Phi^\epsilon_{x'} dt - \int_U f \big(Q^*\Phi^\epsilon_{x'}\big)\ dt,
\end{eqnarray*}
if we let
$$\Phi_{x'}^\epsilon(t) := \psi_\epsilon(x'-t') \big( 1 - h_{\tau(\epsilon)}(-t_1)\big)$$
and apply the Green-Stokes formula \eqref{eq:bvalues1}.
By the use of the decomposition \eqref{eq:Q},
it follows that
\begin{eqnarray*}
\big(a_1(0,\cdot) f_b\big)*\psi_\epsilon (x')
&=&  \int_U \big(Qf\big) \Phi_{x'}^\epsilon dt - \int_U f \big(Q' \Phi_{x'}^\epsilon\big)\ dt\\
&+&  \int_U f(t) a_1(t) \phi_\epsilon(x-t) dt
\end{eqnarray*}
with $x=(0,x')$. We will now show that the first two terms turn to $0$ in $L^p(bU)$
if we let $\epsilon\rightarrow 0$.

So, consider:
\begin{eqnarray*}
\int_{bU} \left|\int_U \big(Qf\big) \Phi_{x'}^\epsilon dt\right|^p dx'
&\leq& \int_{bU} \int_{U} |Qf|^p (\Phi_{x'}^\epsilon)^p dt dx'\\
&\leq& \int_{U} |Qf|^p (1-h_{\tau(\epsilon)}(-t_1))\int_{bU} \psi_{\epsilon}(x'-t') dx' dt\\
&=&  \int_{U} |Qf|^p (1-h_{\tau(\epsilon)}(-t_1)) dt
\end{eqnarray*}
Here, $|Qf|^p (1-h_{\tau(\epsilon)}(-t_1)) \leq |Qf|^p$ (which is in $L^1(U)$),
and converges to $0$ point-wise. Hence, the whole expression turns to $0$
by Lebesgue's Theorem. For the second term, note that
\begin{eqnarray*}
\left|Q'_t \Phi^\epsilon_{x'}\right| 
&=& \left|\big(1-h_{\tau(\epsilon)}(-t_1)\big) Q'_t \psi_\epsilon(x'-\cdot)\right|
\lesssim \epsilon^{-n} \left(1-h_{\tau(\epsilon)}(-t_1)\right).
\end{eqnarray*}
Hence, we conclude:
\begin{eqnarray*}
\int_{bU} \left|\int_U f \big(Q' \Phi_{x'}^\epsilon\big) dt\right|^p dx'
&\leq& \int_U |f|^p \left( \int_{bU} |Q' \Phi_{x'}^\epsilon|^p dx'\right) dt\\
&=& \int_U |f|^p \left( \int_{B_\epsilon'(t')} |Q' \Phi_{x'}^\epsilon|^p dx'\right) dt\\
&\lesssim& \frac{1}{\epsilon} \int_U |f|^p \big(1-h_{\tau(\epsilon)}(-t_1)\big) dt.
\end{eqnarray*}
Here now, for fixed $\epsilon>0$, $\epsilon^{-1}|f|^p\in L^1(U)$,
$$\frac{1}{\epsilon} |f|^p \big(1-h_{\tau}(-t_1)\big) \leq \frac{1}{\epsilon} |f|^p,$$
and the left-hand side converges to $0$ point-wise for $\tau\rightarrow0$.
So, by the Theorem of Lebesgue, there exists $\tau(\epsilon)$ such that
$$\frac{1}{\epsilon} \int_U |f|^p \big(1-h_{\tau(\epsilon)}(-t_1)\big) dt \leq \epsilon.$$
This is our choice of $\tau(\epsilon)$ which has been left open before.
So, we have just seen that
$$\lim_{\epsilon\rightarrow 0}
\big(a_1(0,\cdot) f_b\big)*\psi_\epsilon (x')
= \lim_{\epsilon\rightarrow 0} \int_U f(t) a_1(t) \phi_\epsilon(x-t) dt$$
in $L^p(bU)$. Recall that we had reduced the problem to showing \eqref{eq:bvalues4}.
So, only
$$\lim_{\epsilon\rightarrow 0}\int_U f(t) \big( a_1(t)-a_1(x)\big) \phi_\epsilon(x-t) dt =0$$
in $L^p(bU)$ remains to show.
But, due to compactness, there exists a constant $C>0$ such that
$|a_1(t)-a_1(x)|\leq C \epsilon$ if $|t-x|\leq \epsilon$.
Since $f\in L^p(U)$ and $|\phi_\epsilon|\leq 1$,
the proof is finished easily.
\end{proof}

We remark that the assumptions on the regularity of the boundary $bM$ could be relaxed considerably.

\section{Boundary Values for the $\dq$-Operator}\label{sec:dq}

In this section, we will apply Friedrichs' extension lemma with boundary values,
Theorem \ref{thm:main}, to the $\dq$-operator. Recall the following
definition of $\dq$-boundary values that is common in complex analysis:

\begin{defn}\label{defn:dq}
Let $D\subset\subset \C^n$ be a bounded domain with smooth boundary $bD$, and
$f\in L_{0,q}^p(D)$ with $\dq f\in L^p_{0,q+1}(D)$ in the sense of distributions for $1\leq p<\infty$.
Then, we say that $f$ has weak $\dq$-boundary values $f_b\in L^p_q(bD)$ if
\begin{eqnarray}\label{eq:dqbvalues}
\int_D \dq f\wedge \phi + (-1)^q \int f\wedge \dq \phi = \int_{bD} f_b\wedge \iota^*(\phi)
\end{eqnarray}
for all $\phi\in C^\infty_{n,n-q-1}(\o{D})$, where $\iota: bD\hookrightarrow \C^n$
is the embedding of the boundary.
\end{defn}

In fact, the left hand side of \eqref{eq:dqbvalues} depends only on the pull-back $\iota^*(\phi)$
of $\phi$ to $bD$,
and so it defines a current on $bD$. Generally, this current is called the weak $\dq$-boundary value
of $f$, and we say that $f$ has got boundary values in $L^p$, if this current can be represented
by an $L^p$-form as in Definition \ref{defn:dq}. See \cite{He2} for a more detailed treatment of that topic.
Boundary values as in Definition \ref{defn:dq} are not uniquely defined. The reason is as follows:
Let $r\in C^\infty(\C^n)$ be a defining function for $D$. So, $D=\{z\in \C^n: r(z)<0\}$
and we may assume that $\|dr\|\equiv 1$ on $bD$. Then $\iota^*(dr)=0$ implies $\iota^*(\dq r)=-\iota^*(\partial r)$.
Note that $\phi\in C^\infty_{n,n-q-1}(\o{D})$ contains $\partial r$ necessarily.
Hence, $\iota^*(\dq r)\wedge\iota^*(\phi)=0$ on $bD$ for all $\phi\in C^\infty_{n,n-q-1}(\o{D})$.
One should mention an example where weak $\dq$-boundary values occur:
\begin{thm}{\bf (Harvey-Polking \cite{HaPo})}\label{thm:hapo}
Let $r$ be the strictly plurisubharmonic defining function of a strictly pseudoconvex domain $D\subset\subset \C^n$,
and $\omega\in L^1_{0,1}(D)$ with $\dq \omega=0$ and $|r|^{-1/2} \dq r \wedge\omega\in L^1_{0,2}(D)$.
Then there exists $f\in L^1(D)$ with weak $\dq$-boundary values $f_b\in L^1(bD)$ such that $\dq f=\omega$.
\end{thm}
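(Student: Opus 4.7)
The natural strategy on a strictly pseudoconvex domain is to produce $f$ explicitly via a Henkin-type integral operator and then to read off its boundary trace directly from the Cauchy-Fantappiè part of the kernel; the weighted hypothesis on $|r|^{-1/2}\dq r\wedge\omega$ is precisely what will make that boundary integral converge in $L^1(bD)$. First I would choose a Henkin support function $\Phi(z,\zeta)$, holomorphic in $z$ and smooth in $\zeta$ on a neighbourhood of $\o{D}$, satisfying the standard estimate
$$|\Phi(z,\zeta)|\gtrsim |r(z)|+|r(\zeta)|+|\Im\Phi(z,\zeta)|+|z-\zeta|^2$$
for $z,\zeta$ close to $bD$; such a $\Phi$ exists by strict plurisubharmonicity of $r$.

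Next, form the associated Cauchy-Fantappiè $(n,n-1)$-kernel $K(z,\zeta)$, with denominator $\Phi(z,\zeta)^n$, and define $f(z):=\int_D K(z,\zeta)\wedge\omega(\zeta)$. Since $K$ is weakly singular and $\omega\in L^1(D)$, one gets $f\in L^1(D)$; the Henkin-Ramírez homotopy formula then gives $\dq f=\omega$ as a distribution on $D$, using $\dq\omega=0$ to kill the interior correction term.

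The crux is the boundary trace. Decompose $\omega=\alpha\,\dq r+\omega_t$ on a collar of $bD$; since $\iota^*(\dq r)=-\iota^*(\partial r)$ and every admissible test form $\phi\in C^\infty_{n,n-1}(\o{D})$ contains $\partial r$, the component $\alpha\,\dq r$ drops out of the boundary pairing. For the tangential component $\omega_t$ I would invoke the sharp Henkin estimate
$$\int_{bD}\frac{dS(z)}{|\Phi(z,\zeta)|^n}\ \lesssim\ |r(\zeta)|^{-1/2},$$
which by Fubini converts the hypothesis $|r|^{-1/2}\dq r\wedge\omega\in L^1(D)$ into a uniform $L^1(bD)$ bound on the boundary Henkin integral. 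With $D_\epsilon:=\{r<-\epsilon\}$, this singles out the candidate $f_b\in L^1(bD)$ as the $L^1$-limit of $f|_{bD_\epsilon}$ as $\epsilon\to 0^+$.

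Finally, to check that $f_b$ satisfies Definition \ref{defn:dq}, I would apply the classical Stokes formula on the smoothly bounded subdomains $D_\epsilon$ against $\phi\in C^\infty_{n,n-1}(\o{D})$ and let $\epsilon\to 0^+$: the two interior integrals converge by dominated convergence (since $f,\dq f\in L^1(D)$), while the boundary term converges to $\int_{bD}f_b\wedge\iota^*(\phi)$ by the previous step. The main obstacle is precisely this boundary-trace step: the matching of the weight $|r|^{-1/2}$ with the $|\Phi|^{-n}$-singularity of the Cauchy-Fantappiè kernel is the technical heart of Harvey-Polking and is exactly where strict pseudoconvexity of $D$ is indispensable; the other steps are essentially formal consequences of having the right explicit formula for $f$.
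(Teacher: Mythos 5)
This theorem is not proved in the paper: Ruppenthal quotes it from Harvey--Polking \cite{HaPo} to illustrate where weak $\dq$-boundary values arise, and their original argument constructs $f$ as the $\dq$-Neumann (canonical) solution on the ball rather than via Henkin--Ram\'irez kernels as you propose. So there is no in-paper proof to compare with, and your sketch must be judged on its own. It has a genuine gap at exactly the point you flag as the ``technical heart.'' A Fubini argument against the bound $\int_{bD}|\Phi(z,\zeta)|^{-n}\,dS(z)\lesssim|r(\zeta)|^{-1/2}$ would require $|r|^{-1/2}|\omega|\in L^1(D)$; but the hypothesis $|r|^{-1/2}\dq r\wedge\omega\in L^1_{0,2}(D)$ only controls the complex-tangential part of $\omega$ and says nothing beyond $L^1$ about the component of $\omega$ along $\dq r$. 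Your attempt to dispose of that normal component by observing that $\alpha\,\dq r$ ``drops out of the boundary pairing'' conflates two different objects: for $q=0$ the trace $f_b$ of $f$ is a scalar function and the pairing $\int_{bD}f_b\,\iota^*(\phi)$ contains no $\dq r$ coming from $\omega$ that could cancel against $\partial r$ in $\phi$. The splitting $\omega=\alpha\,\dq r+\omega_t$ has to be applied under the integral defining $f$, and one must then show that the particular component of $K(z,\zeta)$ which wedges with $\dq r(\zeta)$ has a strictly better boundary $L^1$ estimate (so that $\alpha\,\dq r\in L^1(D)$ alone suffices), while the components wedging with $\omega_t$ are the ones producing the $|r(\zeta)|^{-1/2}$ growth absorbed by the hypothesis. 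Without this component-by-component matching of kernel structure to weight, the trace argument does not close.

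Two subsidiary remarks. Setting $f=\int_D K(\cdot,\zeta)\wedge\omega(\zeta)$ with the pure Cauchy--Fantappi\`e kernel does not by itself give $\dq f=\omega$; the Koppelman--Leray homotopy involves the Bochner--Martinelli piece and a transition kernel, and its boundary contribution has to be disposed of, which is not automatic for data that is merely $L^1$. And the stated bound $\int_{bD}|\Phi(z,\zeta)|^{-n}\,dS(z)\lesssim|r(\zeta)|^{-1/2}$, while valid, greatly overestimates the singularity: a Kor\'anyi-ball count shows this integral grows only like $|\log|r(\zeta)||$, which signals that the crude $|\Phi|^{-n}$ majorant is not the estimate one actually uses in a careful kernel analysis.
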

We will now show that Definition \ref{defn:dq} is actually equivalent to Definition \ref{defn:bvalues}
if we make the right choices. So, let $M=\o{D}$ with the underlying Riemannian structure on $\C^n$,
$E=\Lambda^{0,q} T^* M$, $F=\Lambda^{0,q+1} T^* M$, and
$$Q:=\dq: C^\infty_{0,q}(M)=C^\infty(M,E) \longrightarrow C^\infty_{0,q+1}(M)=C^\infty(M,F).$$
Note that $Q^*=-*\partial *$. For $u,v\in C^\infty_{0,q}(\o{D})$:
$$(u,v)_M = \int_M \langle u,v\rangle_E dV_{\C^n} = \int_M u\wedge *\o{v}.$$
In order to reformulate \eqref{eq:dqbvalues}, let $g:=(-1)^{q+1} *\o{\phi} \in C^\infty_{0,q+1}(D)$.
Then $\phi=*\o{g}$. So,
$$\int_D \dq f\wedge\phi = \int_D \dq f\wedge *\o{g} = \int_M \langle\dq f, g\rangle_F\ dV_M = (Qf,g)_M\ ,$$
and
\begin{eqnarray*}
\int_D f\wedge\dq \phi &=& -\int_D f\wedge ** \dq **  \phi = \int_D f\wedge * \o{Q^* *\o{\phi}}\\
&=& (-1)^{q+1} \int_M \langle f,Q^* g\rangle_E\ dV_{\C^n} = (-1)^{q+1} (f,Q^* g)_M.
\end{eqnarray*}
Hence, in the notation of Definition \ref{defn:bvalues}, the left hand side of \eqref{eq:dqbvalues}
reads exactly as
$(Q f, g)_M - (f,Q^*, g)_M$.
For the right hand side, recall that we have chosen the defining function $r$ such that $\|dr\|\equiv 1$
on $bD$. That implies $dS_{bD} = \iota^*(* dr)$. Note that there is a $(0,q)$-form $f_b'\in C^\infty(bD,\Lambda^{0,q} T^*\C^n|_{bD})$
such that $\iota^*(f_b')=f_b$. Sine $\iota^* dr=0$ and $dr\wedge\phi=\dq r\wedge\phi$, we compute
\begin{eqnarray*}
f_b\wedge \iota^*(\phi) &=& \iota^*(f_b'\wedge \phi) 
= \iota^*\big( (*[dr\wedge f_b'\wedge \phi]) *dr\big)
= \big( *[\dq r\wedge f_b'\wedge\phi]\big)\ dS_{bD}\\
&=& \big( *[\dq r\wedge f_b'\wedge * \o{g}]\big) dS_{bD}
= \langle \dq r\wedge f_b', g\rangle_F\ dS_{bM}\\
&=& \langle Q(r f_b'), g\rangle_F\ dS_{bM}
= \frac{1}{i}\langle \sigma_Q(\cdot,\nu^\flat) f_b', g\rangle_F\ dS_{bM}.
\end{eqnarray*}
So, we have
$$\int_{bD} f_b\wedge\iota^*(\phi) = \frac{1}{i}\int_{bM} \langle \sigma_Q(x,\nu^\flat) f_b', g\rangle_{F_x}\ dS_{bM}(x)\ ,$$
and recognize therefore:
\begin{lem}\label{lem:equivalence}
$f\in L^p_{0,q}(D)$ with $\dq f\in L^p_{0,q+1}(D)$ has weak $\dq$-boundary values $f_b\in L^p_q(bD)$
according to Definition \ref{defn:dq} exactly if it has $\dq$-boundary values \linebreak $f_b'\in L^p(bD, \Lambda^{0,q} T^* \C^n|_{bD})$
according to Definition \ref{defn:bvalues}.
\end{lem}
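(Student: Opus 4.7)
The plan is to observe that nearly all of the work for this equivalence is already contained in the Hodge-star computations displayed immediately before the statement; what remains is to assemble those identities into an if-and-only-if and to handle the mismatch between the intrinsic boundary form $f_b\in L^p_q(bD)$ of Definition \ref{defn:dq} and the ambient-bundle form $f_b'\in L^p(bD,\Lambda^{0,q}T^*\C^n|_{bD})$ of Definition \ref{defn:bvalues}.

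First I would note that the assignment $\phi\mapsto g:=(-1)^{q+1}*\overline{\phi}$ is a conjugate-linear bijection from $C^\infty_{n,n-q-1}(\overline{D})$ onto $C^\infty_{0,q+1}(\overline{D})=C^\infty(M,F)$, so quantifying ``for all $\phi$'' in \eqref{eq:dqbvalues} is equivalent to quantifying ``for all $g$'' in \eqref{eq:bvalues}. Under this correspondence the two identities $\int_D\dq f\wedge\phi=(Qf,g)_M$ and $\int_D f\wedge\dq\phi=(-1)^{q+1}(f,Q^*g)_M$ already derived in the excerpt identify the left-hand side of \eqref{eq:dqbvalues} with $(Qf,g)_M-(f,Q^*g)_M$, which is precisely the left-hand side of \eqref{eq:bvalues}.

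For the boundary terms I would fix the correspondence $f_b\leftrightarrow f_b'$ through $\iota^*(f_b')=f_b$: given $f_b$, pick any measurable section extending it (which exists because $\iota^*\colon\Lambda^{0,q}T^*\C^n|_{bD}\to\Lambda^q T^*bD\otimes\C$ is surjective, using the splitting $T^*\C^n|_{bD}=T^*bD\oplus\langle dr\rangle$); given $f_b'$, set $f_b:=\iota^*(f_b')$. The chain of equalities in the displayed calculation then produces
\[
f_b\wedge\iota^*(\phi)=\frac{1}{i}\langle\sigma_Q(\cdot,\nu^\flat)f_b',g\rangle_F\, dS_{bD},
\]
so integration over $bD$ turns the right-hand side of \eqref{eq:dqbvalues} into the right-hand side of \eqref{eq:bvalues}. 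Reading this in both directions gives the claimed equivalence.

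The only point deserving attention is the non-uniqueness of the lift $f_b'$: two extensions of $f_b$ differ by a form in $\ker\iota^*$, i.e., a multiple of $dr$. What rescues the argument is the observation, implicit in the same displayed calculation, that the integrand $\langle\sigma_Q(\cdot,\nu^\flat)f_b',g\rangle_F\, dS_{bD}$ is rewritten as a scalar multiple of $\iota^*(\dq r\wedge f_b'\wedge\phi)$, which depends on $f_b'$ only through $\iota^*(f_b')=f_b$ (using $\iota^*(dr)=0$ together with $\iota^*(\partial r)=-\iota^*(\dq r)$ to absorb the normal ambiguity). Hence the right-hand side of \eqref{eq:bvalues} is independent of the chosen extension, the correspondence is well-defined, and the lemma follows.
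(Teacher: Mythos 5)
Your argument follows the paper's route: translate $\phi\leftrightarrow g=(-1)^{q+1}*\overline\phi$, use the Hodge-star computations to identify the interior terms, and use $\iota^*(f_b')=f_b$ to identify the boundary integrals. That is exactly the paper's strategy, and everything you write about the interior terms and about the well-definedness of the boundary integrand is fine.

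There is, however, a genuine error in your treatment of the lift $f_b'$. You assert that
$\iota^*\colon\Lambda^{0,q}T^*\C^n|_{bD}\to\Lambda^q T^*bD\otimes\C$ is \emph{surjective}, citing the real splitting $T^*\C^n|_{bD}=T^*bD\oplus\langle dr\rangle$. This splitting does give surjectivity of $\iota^*$ on \emph{full} $q$-forms $\Lambda^q T^*\C^n|_{bD}$, but it is not compatible with bidegree: $dr=\partial r+\dq r$ has a nonzero $(1,0)$-part, so the fibre $\langle dr\rangle$ is not contained in $\Lambda^{1,0}\oplus\Lambda^{0,1}$ in any way that respects type. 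In fact, restricted to $(0,q)$-forms, $\iota^*$ is \emph{injective} (and, for $q\geq 1$, $n\geq 2$, \emph{not} surjective, since $\dim\Lambda^{0,q}_p=\binom{n}{q}<\binom{2n-1}{q}=\dim\Lambda^q T^*_pbD\otimes\C$). Injectivity can be checked directly: if $\alpha\in\Lambda^{0,q}_p$ satisfies $\iota^*\alpha=0$ then $\alpha\in dr\wedge\Lambda^{q-1}$, but writing $\alpha=dr\wedge\beta=(\partial r+\dq r)\wedge\beta$ and isolating the $(0,q)$-part forces $\partial r\wedge\beta^{(0,q-1)}=0$ and hence $\beta^{(0,q-1)}=0$, so $\alpha=0$. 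Two consequences: (i) the lift $f_b'$, \emph{when it exists}, is unique, so the paragraph you devote to the ``non-uniqueness of the lift'' addresses a non-issue; and (ii) for a general $f_b\in L^p_q(bD)$ the lift need not exist at all, which is precisely the point that requires an argument. The missing step is to use the indeterminacy of $f_b$ (which the paper records just before the lemma: adding $\iota^*(\dq r)\wedge\gamma$ does not change the right-hand side of \eqref{eq:dqbvalues}) to replace $f_b$ by a representative lying in the image of $\iota^*|_{\Lambda^{0,q}}$. Your proof of the direction from Definition \ref{defn:bvalues} to Definition \ref{defn:dq} is complete; the converse direction has this gap.
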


So, we are now in the position to translate Theorem \ref{thm:main}
into the Friedrichs' extension lemma with boundary values for the $\dq$-operator:

\begin{thm}\label{thm:main2}
Let $D\subset\subset \C^n$ be a bounded domain with smooth boundary $bD$, and
$f\in L_{0,q}^p(D)$ with $\dq f\in L^p_{0,q+1}(D)$ in the sense of distributions for $1\leq p<\infty$.
Then $f$ has weak $\dq$-boundary values $f_b\in L^p_q(bD)$ according to Definition \ref{defn:dq}
exactly if there is a sequence $\{f_\epsilon\}$ in $C^\infty_{0,q}(\o{D})$ such that
$\lim_{\epsilon\rightarrow 0} f_\epsilon = f$ in $L^p_{0,q}(D)$,
$\lim_{\epsilon\rightarrow 0} \dq f_\epsilon = \dq f$ in $L^p_{0,q+1}(D)$, and
$$\lim_{\epsilon\rightarrow 0}\ \iota^*(f_\epsilon\wedge \phi) = f_b\wedge \iota^*(\phi)\ \ \mbox{ in }\ L^p_{2n-1}(bD)$$
for all $\phi\in C^\infty_{n,n-q-1}(\o{D})$.
If $r\in C^\infty(\C^n)$ is a defining function for $D$, i.e. \linebreak $D=\{z\in \C^n: r(z)<0\}$
and $dr\neq 0$ on $bD$, then the last condition is equivalent to
$$\lim_{\epsilon\rightarrow 0}\ \iota^*(f_\epsilon\wedge \partial r) = f_b \wedge \iota^*(\partial r)\ \ \mbox{ in }\ L^p_{q+1}(bD).$$
If $q=0$, then this in turn is equivalent to
$$\lim_{\epsilon\rightarrow 0}\ \iota^*(f_\epsilon) = f_b \ \ \mbox{ in }\ L^p(bD).$$
\end{thm}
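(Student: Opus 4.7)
Proof plan. The strategy is to reduce everything to the abstract Friedrichs' extension lemma with boundary values (Theorem \ref{thm:main}) via Lemma \ref{lem:equivalence}, and then to translate the convergence of $\sigma_{\dq}(\cdot,\nu^\flat)f_\epsilon$ into wedge products first with $\partial r$ and, from there, with an arbitrary test form $\phi$.

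The ``sequence $\Rightarrow$ weak $\dq$-boundary values'' direction is the classical Stokes argument: for each $\phi\in C^\infty_{n,n-q-1}(\o{D})$,
\[
\int_D \dq f_\epsilon\wedge\phi+(-1)^q\int_D f_\epsilon\wedge\dq\phi=\int_{bD}\iota^*(f_\epsilon\wedge\phi),
\]
and all three terms pass to the limit by the assumed $L^p$-convergences, giving Definition \ref{defn:dq}. For the converse, starting from $f_b$ as in Definition \ref{defn:dq}, Lemma \ref{lem:equivalence} supplies an extension $f_b'\in L^p(bD,\Lambda^{0,q}T^*\C^n|_{bD})$ with $\iota^*f_b'=f_b$ satisfying Definition \ref{defn:bvalues}. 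Theorem \ref{thm:main} then produces $\{f_\epsilon\}\subset C^\infty_{0,q}(\o{D})$ with the two interior convergences and with $\sigma_{\dq}(\cdot,\nu^\flat)f_\epsilon|_{bD}\to\sigma_{\dq}(\cdot,\nu^\flat)f_b'$ in $L^p(bD,\Lambda^{0,q+1}T^*\C^n|_{bD})$. The computation preceding Lemma \ref{lem:equivalence} shows $\tfrac{1}{i}\sigma_{\dq}(\cdot,\nu^\flat)u=\dq r\wedge u$ (taking $\|dr\|\equiv 1$, which we may assume up to multiplying $r$ by a positive smooth function that only rescales $\iota^*(\partial r)$), so the symbol convergence reads $\dq r\wedge f_\epsilon|_{bD}\to\dq r\wedge f_b'$ in $L^p$. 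Applying the bounded pullback $\iota^*$ and using $\iota^*(\dq r)=-\iota^*(\partial r)$ gives
\[
\iota^*(f_\epsilon\wedge\partial r)\to f_b\wedge\iota^*(\partial r)\quad\text{in }L^p_{q+1}(bD),
\]
which is the second form of boundary convergence.

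To obtain the first form from the second, observe that $\partial r$ is non-vanishing in a neighborhood $V$ of $bD$, so $dz_1\wedge\cdots\wedge dz_n=\partial r\wedge\gamma$ for some smooth $(n-1,0)$-form $\gamma$ on $V$. A partition-of-unity argument in a collar of $bD$ then writes every $\phi\in C^\infty_{n,n-q-1}(\o{D})$ as $\partial r\wedge\tilde\phi$ on some neighborhood of $bD$ with $\tilde\phi\in C^\infty_{n-1,n-q-1}$. On $bD$ one then has $\iota^*(f_\epsilon\wedge\phi)=\iota^*(f_\epsilon\wedge\partial r)\wedge\iota^*(\tilde\phi)$, and wedging with the fixed bounded smooth form $\iota^*(\tilde\phi)$ converts $L^p_{q+1}$-convergence into $L^p_{2n-1}$-convergence with the expected limit $f_b\wedge\iota^*(\phi)$. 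For $q=0$ the third form is equivalent to the second because $\iota^*(\partial r)$ is nowhere vanishing on $bD$: for $v\in T_pbD$ one computes $\iota^*(\partial r)|_p(v)=-\tfrac{i}{2}\,dr|_p(Jv)$, whose kernel is the codimension-one maximal complex subspace $T_pbD\cap J(T_pbD)$. Thus $|\iota^*(\partial r)|$ is bounded above and below on the compact boundary $bD$, so pointwise multiplication by $\iota^*(\partial r)$ is a bounded isomorphism onto its image and the two convergences are equivalent.

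The only genuinely nontrivial step is the global factorization $\phi=\partial r\wedge\tilde\phi$ near $bD$, which requires the partition-of-unity argument above since $\partial r$ may vanish in the interior of $D$. Everything else is signs, pullbacks, and invoking Theorem \ref{thm:main} and Lemma \ref{lem:equivalence}.
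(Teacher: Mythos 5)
Your proposal is correct and follows the route the paper intends: the paper itself states Theorem~\ref{thm:main2} without a written proof, as an immediate translation of Theorem~\ref{thm:main} through Lemma~\ref{lem:equivalence}, and you have filled in exactly the bookkeeping that translation requires — passing from the symbol statement $\tfrac1i\sigma_{\dq}(\cdot,\nu^\flat)u=\dq r\wedge u$ to $\iota^*(f_\epsilon\wedge\partial r)$ via $\iota^*(\dq r)=-\iota^*(\partial r)$, the collar factorization $\phi=\partial r\wedge\tilde\phi$ to pass to general test forms, and the non-vanishing of $\iota^*(\partial r)$ for the $q=0$ case. One cosmetic remark: you explicitly prove only the implication from the ``$\partial r$'' form to the ``general $\phi$'' form, but the equivalence loop closes because the ``general $\phi$'' form plus the Stokes argument gives back Definition~\ref{defn:dq}, which by Lemma~\ref{lem:equivalence} and Theorem~\ref{thm:main} yields the ``$\partial r$'' form again — worth saying in one line.
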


\newpage
\section{Regularity of the BMK Formula}

The characterization of weak $\dq$-boundary values by approximation is a quite useful tool
because it allows us to simply work in the $C^\infty$-category in many situations.
As an application, we will derive the Bochner-Martinelli-Koppelman formula
for $L^p$-forms with weak $\dq$-boundary values.
Before doing that, we present another technical but useful result.
For convenience of the reader, let us recall shortly the Bochner-Martinelli-Koppelman formula.
\begin{defn}\label{defn:kq}
Let $0\leq q\leq n$. The Bochner-Martinelli-Koppelman kernel $B_{nq}$ in $\C^n$
is then given as
$$B_{nq}(\zeta,z)=\frac{(n-1)!}{2^{q+1}\pi^n} \frac{1}{\|\zeta-z\|^{2n}}
\sum_{\substack{j,J,\\|L|=q+1}} \epsilon^{L}_{jJ} (\o{\zeta_j}-\o{z_j})(*d\zeta^L)\wedge d\o{z}^J,$$
where
$$\epsilon^A_B:=\left\{\begin{array}{ll}\mbox{sign } \pi &
\mbox{, if $A=B$ as sets and } \pi \mbox{ is a permutation with $B=\pi A$, }\\
0 & \mbox{, if $A\neq B$.}
\end{array}\right.$$
Moreover, let $B_{n,-1}\equiv 0$.
\end{defn}
Now, let $D\subset\subset \C^n$ be a bounded domain with $C^1$-smooth boundary $bD$.
If $g$ is a measurable $(0,q+1)$-form on $D$, let
$${\bf B}^D_q g(z):=\int_D g(\zeta)\wedge B_{nq}(\zeta,z),$$
and if $f$ is a measurable $q$-form on $bD$, let
$${\bf B}^{bD}_q f(z):=\int_{bD} f(\zeta)\wedge B_{nq}(\zeta,z),$$
provided, the integrals do exist. Then:
\begin{thm}\label{thm:bmk-formel}
{\bf (BMK formula \cite{Ko})}
Let $D\subset\subset \C^n$ be a bounded domain with $C^1$-smooth boundary $bD$, $1\leq q\leq n$,
and $f\in C^1_{0,q}(\o{D})$. Then:
\begin{eqnarray}\label{eq:bmk}
f(z)={\bf B}^{bD}_q f(z)-{\bf B}^D_q(\dq f)(z)-\dq_z {\bf B}^D_{q-1}f(z),
\end{eqnarray}
where ${\bf B}^D_{q-1}f\in C^1_{0,q-1}(D)$.
\end{thm}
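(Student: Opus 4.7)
The plan is to follow the classical Koppelman approach. Fix $z\in D$, remove a small ball $B_\epsilon(z)\subset D$, apply Stokes' theorem on the punctured domain $D_\epsilon := D\setminus \overline{B_\epsilon(z)}$ to a form built from $f$ and $B_{n,q}(\cdot,z)$, and let $\epsilon\to 0$. The integral over $bD$ produces ${\bf B}^{bD}_q f(z)$, the interior contributions recover the two volume terms, and the vanishing sphere $bB_\epsilon(z)$ carries the reproducing singularity responsible for the left-hand side $f(z)$.

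The crucial analytic input, which I would verify first by direct calculation from Definition \ref{defn:kq}, is the off-diagonal differential identity of Koppelman type
\begin{equation*}
\dq_\zeta B_{n,q}(\zeta,z) = (-1)^{q+1}\, \dq_z B_{n,q-1}(\zeta,z), \qquad \zeta\neq z,
\end{equation*}
with signs dictated by the explicit conventions in the definition. This is the natural analogue of the $\dq$-closedness of the Cauchy kernel off the diagonal, and it exploits the symmetry of the factor $(\overline{\zeta_j}-\overline{z_j})/\|\zeta-z\|^{2n}$ under matched $\partial/\partial\overline{\zeta_k}$ and $\partial/\partial\overline{z_k}$ differentiations. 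With this identity in hand, applying $\dq_\zeta$ to $f(\zeta)\wedge B_{n,q}(\zeta,z)$ (which has bidegree $(n,n-1)$ in $\zeta$, so is not top-degree) and invoking Stokes on $D_\epsilon$ yields
\begin{equation*}
\int_{bD} f\wedge B_{n,q} - \int_{bB_\epsilon(z)} f\wedge B_{n,q} = \int_{D_\epsilon} \dq f\wedge B_{n,q} + (-1)^q \int_{D_\epsilon} f\wedge \dq_\zeta B_{n,q}.
\end{equation*}
As $\epsilon\to 0$, the first volume integral tends to ${\bf B}^D_q(\dq f)(z)$ by dominated convergence against the weakly singular kernel. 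Using the identity to replace $\dq_\zeta B_{n,q}$ by $\dq_z B_{n,q-1}$ up to sign, and pulling $\dq_z$ outside the integral --- legitimate because the singularity of $B_{n,q-1}$ remains locally integrable in $\zeta$ uniformly in $z$ --- the second term becomes $\dq_z {\bf B}^D_{q-1} f(z)$ with the correct sign.

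The main remaining step, which I expect to be the principal computational obstacle, is the sphere limit
\begin{equation*}
\lim_{\epsilon\to 0}\int_{bB_\epsilon(z)} f(\zeta)\wedge B_{n,q}(\zeta,z) = f(z).
\end{equation*}
Writing $f(\zeta) = f(z) + O(|\zeta-z|)$ (valid since $f\in C^1$) and rescaling $\zeta = z+\epsilon\omega$, the $O(|\zeta-z|)$ remainder contributes $O(\epsilon)$ and vanishes, while the leading piece reduces to a spherical integral of the singular factor of $B_{n,q}$. This is the classical normalization calculation underlying the Bochner-Martinelli reproducing formula; the $d\overline{z}^J$ factors with $|J|=q$ merely transport the bidegree $(0,q)$ components of $f(z)$ through the integral, returning $f(z)$ in its full form. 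Assembling all three limits and rearranging gives \eqref{eq:bmk}. The regularity ${\bf B}^D_{q-1} f\in C^1_{0,q-1}(D)$ is a standard consequence of the $C^1$ regularity of $f$ together with differentiation-under-the-integral for the weakly singular kernel $B_{n,q-1}$ away from $bD$.
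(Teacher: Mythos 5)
The paper does not prove Theorem \ref{thm:bmk-formel} at all; it is stated as a classical result with the citation \cite{Ko}, so there is no proof in the paper to compare against. Your proposal follows the standard Stokes-on-a-punctured-domain proof of the Koppelman formula, and its overall architecture (excise $B_\epsilon(z)$, apply Stokes to $f\wedge B_{n,q}$, invoke the off-diagonal Koppelman differential identity, take the sphere limit) is the right one.

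Two steps, however, are treated too lightly. First, the sign in the identity $\dq_\zeta B_{n,q}=(-1)^{q+1}\dq_z B_{n,q-1}$ interacts with the double-form commutation rule $\dq_z(f\wedge B_{n,q-1})=(-1)^{\deg f}f\wedge\dq_z B_{n,q-1}$ and with the orientation of $bB_\epsilon$, and a careless bookkeeping of these three signs can flip the final $\dq_z{\bf B}^D_{q-1}f$ term; since the normalization in Definition \ref{defn:kq} is one specific choice among several in the literature, the exponent $(-1)^{q+1}$ should not be asserted before that verification is actually carried out. Second, and more seriously, the justification you give for pulling $\dq_z$ outside the interior integral --- and likewise for the $C^1$-regularity of ${\bf B}^D_{q-1}f$ --- is wrong as stated: $B_{n,q-1}(\zeta,z)$ is $O(|\zeta-z|^{1-2n})$, so it is weakly singular and integrable, but its $z$-derivative is $O(|\zeta-z|^{-2n})$, which is \emph{not} locally integrable, so differentiation under the integral sign is not "legitimate because the singularity remains locally integrable." The standard fix is to write $f(\zeta)=f(z)+\bigl(f(\zeta)-f(z)\bigr)$ and use the $C^1$-bound $|f(\zeta)-f(z)|\lesssim|\zeta-z|$ on the remainder, which restores integrability of the differentiated kernel on that piece and lets the constant piece be handled by the explicit spherical computation (or, equivalently, one argues via the Koppelman identity inside the Stokes step so that only $\dq_\zeta$ of the kernel ever appears under the integral). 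Without this the argument has a genuine gap, even though the conclusion is of course true.
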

In the following, we will show that \eqref{eq:bmk} is still valid
under the assumption that $f\in L^1_{0,q}(D)$ with $\dq f\in L^1_{0,q+1}(D)$
has weak $\dq$-boundary values $f_b\in L^1_q(bD)$.
It is well-known that

\begin{lem}\label{lem:bmkreg1}
Let $D\subset\subset \C^n$ be a bounded domain. Then,
${\bf B}^D_q$ defines a bounded linear operator
$$L^p_{0,q+1}(D)\rightarrow L^r_{0,q}(D)$$
for all $1\leq p,r\leq\infty$ such that $1/r > 1/p - 1/(2n)$.
\end{lem}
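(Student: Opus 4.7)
The plan is to dominate ${\bf B}^D_q$ pointwise by a Riesz-type potential and then invoke Young's convolution inequality on $\R^{2n}$. Inspecting Definition \ref{defn:kq}, each coefficient of $B_{nq}(\zeta,z)$, when expanded in a standard basis of forms, is a constant multiple of $(\overline{\zeta_j}-\overline{z_j})/\|\zeta-z\|^{2n}$, whose absolute value is bounded by $\|\zeta-z\|^{-(2n-1)}$. Hence for every $g\in L^p_{0,q+1}(D)$ one obtains the pointwise estimate
\[
|{\bf B}^D_q g(z)|\ \lesssim\ \int_D\frac{|g(\zeta)|}{\|\zeta-z\|^{2n-1}}\,dV(\zeta),\qquad z\in D.
\]

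Since $D$ is bounded, fix $R>0$ with $\|\zeta-z\|<R$ for all $\zeta,z\in D$. Extending $g$ by zero outside $D$ and putting $K(w):=\chi_{B_R(0)}(w)\,\|w\|^{-(2n-1)}$, the right-hand side above is dominated by the convolution $(|g|*K)(z)$ on $\R^{2n}$. An elementary computation in polar coordinates shows that
\[
\|K\|_{L^s(\R^{2n})}^{s}\ =\ \int_{B_R(0)}\|w\|^{-s(2n-1)}\,dV(w)\ <\ \infty \quad\Longleftrightarrow\quad s<\frac{2n}{2n-1}.
\]

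Given $p,r$ with $1/r>1/p-1/(2n)$, I would first treat the case $r\geq p$. Set $1/s:=1+1/r-1/p$; then $1/s\leq 1$, so $s\geq 1$ is a legitimate Lebesgue exponent, and the hypothesis rearranges to $1/s>1-1/(2n)$, i.e.\ $s<2n/(2n-1)$, so $K\in L^s(\R^{2n})$. Young's convolution inequality then yields
\[
\|{\bf B}^D_q g\|_{L^r(D)}\ \lesssim\ \|K\|_{L^s(\R^{2n})}\,\|g\|_{L^p(D)},
\]
which is the desired bound. If instead $r<p$, I apply the case $r'=p$ (for which the hypothesis $0>-1/(2n)$ is trivial, and $s=1$ is admissible) and then compose with the continuous embedding $L^p(D)\hookrightarrow L^r(D)$ available because $|D|<\infty$.

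I do not expect any genuine obstacle: the whole argument is the classical $L^p\to L^r$ estimate for a fractional integration operator with a kernel having integrable singularity of order $2n-1$, and it reduces to verifying the exponent arithmetic $1+1/r=1/p+1/s$ against the integrability threshold $s<2n/(2n-1)$. The only point requiring care is the split between $r\geq p$ and $r<p$; the latter is handled cheaply by the finite-measure embedding just mentioned.
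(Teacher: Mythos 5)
Your proof is correct and follows essentially the same approach the paper sketches: dominate the kernel by $\|\zeta-z\|^{-(2n-1)}$ (noting the exponent sign is mistyped as $2n-1$ in the paper) and apply Young's convolution inequality, with the exponent arithmetic $1+1/r=1/p+1/s$ and the threshold $s<2n/(2n-1)$. Your explicit split into $r\geq p$ (Young directly) and $r<p$ (Young at $r=p$ followed by the finite-measure embedding $L^p(D)\hookrightarrow L^r(D)$) is a clean way to handle the full range of exponents and matches the intent of the paper's one-line appeal to Young's inequality.
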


This is a direct consequence of $\|B_{nq}(\zeta,z)\|\lesssim \|\zeta-z\|^{2n-1}$
and Young's inequality,
which is usually used for estimating integral operators (cf. for example \cite{LiMi}, Proposition III.5.35).
In order to estimate the BMK boundary operator ${\bf B}^{bD}_q$,
we need a more general version of such an inequality. So, we will make use of
the following technical result. The proof can be found in \cite{Rp2}, Theorem 3.3.4:

\begin{thm}\label{thm:young2}
Let $1\leq t\leq s<\infty$ and $1\leq a, b\leq\infty$ be fixed,
$(X,\mu)$ and $(Y,\nu)$ measure spaces with
$\mu(X)<\infty$ and $\nu(Y)<\infty$,
and $K$ a $\mu\times\nu$-measurable function on $X\times Y$ such that
\begin{eqnarray}
\int_X |K(x,y)|^t d\mu(x)\leq g(y) & \mbox{for almost all } y \in Y,\label{eq:rand1}\\
\int_Y |K(x,y)|^s d\nu(y)\leq h(x) & \mbox{for almost all } x \in X,\label{eq:rand2}
\end{eqnarray}
where $g\in L^a(Y)$ and $h\in L^b(X)$. Then:

\vspace{1mm}
I. The linear operator $f\mapsto {\bf T}f$ which is given by
$${\bf T}f(y)=\int_X K(x,y)f(x) d\mu(x)$$
for almost all $y\in Y$ defines a bounded operator
${\bf T}: L^p(X) \rightarrow L^r(Y)$ for all
$1\leq p,r\leq\infty$ satisfying
\begin{eqnarray}\label{eq:rand22a}
p\geq \left\{
\begin{array}{lll}
\frac{t}{t-1} & , \mbox{ if } & t>1,\\
\infty & , \mbox{ if } & t=1,
\end{array}
\right.
\end{eqnarray}
and
\begin{eqnarray*}
r\leq at.
\end{eqnarray*}

\vspace{1mm}
II. The mapping $f\mapsto {\bf T} f$ is bounded as an operator
${\bf T}: L^p(X)\rightarrow L^1(Y)$ for $1\leq p < \infty$ with
\begin{eqnarray}\label{eq:rand3}
p\geq \left\{
\begin{array}{lll}
\frac{sb}{sb-1} & , \mbox{ if } & 1<sb<\infty,\\
1 & , \mbox{ if } & b=\infty.
\end{array}
\right.
\end{eqnarray}

\vspace{1mm}
III. If \eqref{eq:rand3} is satisfied and $sb\neq t$, 
then $f\mapsto {\bf T}f$ defines a bounded operator ${\bf T}: L^p(X)\rightarrow L^r(Y)$ 
for all $1\leq r \leq \infty$ with
\begin{eqnarray}\label{eq:rand4}
\frac{1}{r} = \left(\frac{sb}{sb-t}\right)\left(\frac{1}{p}+\frac{1}{t}-1\right)
\end{eqnarray}
and
\begin{eqnarray}\label{eq:rand5}
r \leq t \left(a\frac{s-t}{s}+1\right).
\end{eqnarray}
We have made the following conventions:
In \eqref{eq:rand4}, let $1/r=0$ if $r=\infty$.
If $b=\infty$, then \eqref{eq:rand4} has to be interpreted as $\frac{1}{r} = \frac{1}{p}+\frac{1}{t}-1$.
If $a=\infty$, then \eqref{eq:rand5} reads as $r\leq\infty$.
\end{thm}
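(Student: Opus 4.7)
The plan is to treat the three parts in order, with Parts I and II essentially direct applications of Hölder's inequality and Part III the main content, obtained either by interpolation or by a three-factor Hölder decomposition.

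For Part I, the starting point is Hölder's inequality in the $x$-variable:
$$|{\bf T}f(y)| \leq \|K(\cdot,y)\|_{L^{p'}(X)}\,\|f\|_{L^p(X)}.$$
Since $p\geq t/(t-1)$ forces $p'\leq t$ and $\mu(X)<\infty$, a second application of Hölder lets one descend from $L^{p'}$ to $L^t$ in $x$, yielding $\|K(\cdot,y)\|_{p'}\leq \mu(X)^{1/p'-1/t}g(y)^{1/t}$. Taking the $L^r(Y)$-norm in $y$ produces $\|g^{1/t}\|_{L^r(Y)}=\|g\|_{L^{r/t}(Y)}^{1/t}$, which is finite by $g\in L^a(Y)$ and the embedding $L^a\hookrightarrow L^{r/t}$ (valid since $\nu(Y)<\infty$ and $r\leq at$).

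For Part II, use Fubini to obtain
$$\|{\bf T}f\|_{L^1(Y)}\leq \int_X |f(x)|\int_Y |K(x,y)|\,d\nu(y)\,d\mu(x).$$
Hölder in $y$ (using $\nu(Y)<\infty$ and the $L^s$-bound on $K(x,\cdot)$) bounds the inner integral by $\nu(Y)^{1/s'}h(x)^{1/s}$; a further application of Hölder in $x$ with the conjugate pair $(p,p')$ gives a product of $\|f\|_p$ and $\|h\|_{L^{p'/s}(X)}^{1/s}$, which is finite precisely under the condition \eqref{eq:rand3}, namely $p'/s\leq b$.

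For Part III the approach is a direct three-factor Hölder decomposition
$$|K(x,y)|\,|f(x)| \;=\; |K(x,y)|^{\alpha}\cdot|K(x,y)|^{\beta}|f(x)|^{\gamma}\cdot|f(x)|^{1-\gamma},$$
where the exponents $\alpha,\beta,\gamma$ (together with matching Hölder exponents in $x$ and then in $y$) are chosen so that (i) the first factor, integrated in $x$, produces a power of $g(y)$ via the $L^t$-bound, (ii) the middle factor, after taking the $L^r$-norm in $y$ and applying Fubini, consumes the $L^s$-bound to yield a power of $h(x)$, and (iii) the last factor delivers $|f|^p$ at the end. The resulting system of algebraic relations (total powers of $|K|$ and $|f|$ equal to $1$, Hölder exponents summing to $1$ in $x$ and in $y$, and matching of the $L^a$- and $L^b$-norms of $g$ and $h$) determines the exponents uniquely and reduces, after simplification, to the identity $\frac{1}{r}=\frac{sb}{sb-t}\bigl(\frac{1}{p}+\frac{1}{t}-1\bigr)$ together with the admissibility range \eqref{eq:rand5}; the excluded case $sb=t$ appears as a vanishing denominator. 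Alternatively, one may interpolate by Riesz--Thorin between the endpoint operators ${\bf T}\colon L^{t'}\to L^{at}$ of Part I and ${\bf T}\colon L^{(sb)'}\to L^1$ of Part II, though in this route one must choose the endpoints with care, since the naïve choice does not reproduce \eqref{eq:rand4} exactly and one reads off \eqref{eq:rand5} from the admissibility constraint $\theta\in[0,1]$.

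The principal obstacle is the bookkeeping in Part III: verifying that the four independent exponent relations close up consistently to give \eqref{eq:rand4} and \eqref{eq:rand5}, and separately confirming that the stated conventions at infinity ($a=\infty$, $b=\infty$, $r=\infty$) and the boundary cases ($sb=1$, $t=1$) coincide with the limits of the derived formulas. The hypothesis $1\leq t\leq s$ is used to ensure the $L^s$-in-$y$ bound on $K$ can be played off against the $L^t$-in-$x$ bound via the finite measure of $X$ and $Y$.
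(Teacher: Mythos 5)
First, a caveat on the comparison: the paper does not prove Theorem \ref{thm:young2} at all --- it only refers to \cite{Rp2}, Theorem 3.3.4 --- so there is no in-paper argument to measure you against; I can only judge your plan on its own terms. Your Parts I and II are correct and essentially complete: Hölder in $x$ with $p'\leq t$ plus the finite measure of $Y$ gives I (with $r\leq at$ exactly the condition for $\|g\|_{L^{r/t}}$ to be controlled by $\|g\|_{L^a}$), and Fubini plus Hölder in $y$ and then in $x$ gives II, with $p'\leq sb$ reproducing \eqref{eq:rand3}.

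Part III is where the actual content lies, and there your proposal stops short of a proof. The three-factor shape $|K|^{\alpha}\cdot|K|^{\beta}|f|^{\gamma}\cdot|f|^{1-\gamma}$ is the right one, but your claim that the listed relations ``determine the exponents uniquely'' is not accurate: the relations close up only after one extra choice, and different admissible choices give different (generally weaker) target exponents $r$. The choice that produces exactly \eqref{eq:rand4} is: with $r$ defined by \eqref{eq:rand4}, put $\lambda:=1-\tfrac{t}{sb}$, $\tfrac1{q_1}:=\tfrac1t-\tfrac1r$, $\tfrac1{q_3}:=1-\tfrac1t$, and write
\begin{equation*}
|K(x,y)|\,|f(x)| \;=\; |K|^{t/q_1}\cdot\bigl(|K|^{t/r}|f|^{p\lambda/r}\bigr)\cdot|f|^{p/q_3},
\end{equation*}
i.e.\ the mixed factor must carry $|K|$ to the power $t/r$ (not a free $\beta$). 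Hölder in $x$ with exponents $(q_1,r,q_3)$ (here \eqref{eq:rand3} is what guarantees $r\geq t$, hence $q_1\geq1$), the $r$-th power, integration in $y$ and Fubini reduce everything to $\int_X|f|^{p\lambda}\bigl[\int_Y|K|^{t}g^{r/q_1}d\nu\bigr]d\mu$; the inner integral is handled by Hölder in $y$ with exponents $\bigl(\tfrac st,\tfrac s{s-t}\bigr)$, and it is precisely here that $\tfrac{r}{q_1}\cdot\tfrac{s}{s-t}\leq a$, i.e.\ \eqref{eq:rand5}, is needed --- so \eqref{eq:rand5} is an admissibility hypothesis for the argument, not an output of the bookkeeping. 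The remaining $\int_X|f|^{p\lambda}h^{t/s}d\mu$ is estimated by Hölder with exponents $\bigl(\tfrac1\lambda,\tfrac1{1-\lambda}\bigr)$, and the value of $\lambda$ above is exactly what makes $\tfrac{t}{s(1-\lambda)}=b$; the powers of $\|f\|_p$ then sum to $r$ and \eqref{eq:rand4} drops out, with the conventions for $a=\infty$, $b=\infty$ and $s=t$ (forcing $q_1=\infty$) checking out. Your caution about the interpolation alternative is justified and worth keeping: Riesz--Thorin between the endpoints $L^{t'}\to L^{at}$ of I and $L^{(sb)'}\to L^1$ of II yields $\tfrac1r=\tfrac{1-\theta}{at}+\theta$, which is strictly worse than the $\tfrac1r=\tfrac{\theta}{t}$ encoded in \eqref{eq:rand4} whenever $a<\infty$, so that route cannot prove Part III as stated. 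In short: right strategy, but the decisive exponent selection and verification --- the heart of Part III --- is missing from the proposal.
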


It is now easy to deduce:

\begin{lem}\label{lem:bmkreg2}
Let $D\subset\subset \C^n$ be a bounded domain with $C^1$-smooth boundary $bD$.
Then, ${\bf B}^{bD}_q$ is bounded as an operator
$$L^p_q(bD)\rightarrow L^p_{0,q}(D)$$
for all $1\leq p < \infty$.
\end{lem}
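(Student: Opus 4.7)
The plan is to reduce boundedness of $\mathbf{B}^{bD}_q$ to a scalar integral operator and apply Theorem~\ref{thm:young2} (Part III). By inspection of Definition~\ref{defn:kq}, every coefficient of $B_{nq}(\zeta,z)$ is, up to bounded smooth factors, of the form $(\overline{\zeta_j}-\overline{z_j})/\|\zeta-z\|^{2n}$, so that componentwise
\[
\|B_{nq}(\zeta,z)\| \;\lesssim\; K(\zeta,z) \;:=\; \|\zeta-z\|^{-(2n-1)}.
\]
With $X=bD$ equipped with surface measure $dS$ and $Y=D$ equipped with Euclidean volume $dV$, it therefore suffices to show that the scalar operator with kernel $K$ is bounded $L^p(bD)\to L^p(D)$ for each $1\leq p<\infty$.

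I would verify hypotheses \eqref{eq:rand1}, \eqref{eq:rand2} with the choice $t=1$, some $s\in(1,\tfrac{2n}{2n-1})$, and $b=\infty$. For \eqref{eq:rand2}, since $(2n-1)s<2n$, a translation and enlargement to a ball yields
\[
\int_D K(\zeta,z)^s\,dV(z) \;\leq\; \int_{B_{\mathrm{diam}\,D}(0)} \|w\|^{-(2n-1)s}\,dw \;=:\; h(\zeta)
\]
uniformly in $\zeta\in bD$, so $h\in L^\infty(bD)$ and $b=\infty$ is permissible. For \eqref{eq:rand1}, parametrizing $bD$ locally as a graph over $\R^{2n-1}$ near the point of $bD$ closest to $z$ and using the elementary estimate $\int_0^R r^{2n-2}/\max(r,\delta)^{2n-1}\,dr \lesssim 1+|\log\delta|$, I obtain
\[
\int_{bD} K(\zeta,z)\,dS(\zeta) \;\lesssim\; 1+|\log d(z,bD)| \;=:\; g(z),
\]
and this function $g$ belongs to $L^a(D)$ for every finite $a$.

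Next I would invoke Part III of Theorem~\ref{thm:young2}. The condition $sb\neq t$ is automatic because $b=\infty$, and condition \eqref{eq:rand3} reduces to $p\geq 1$. With $b=\infty$ the Young-type formula \eqref{eq:rand4} collapses to $1/r=1/p+1/t-1=1/p$, so the target space is exactly $L^p(D)$. Finally, the auxiliary constraint \eqref{eq:rand5} reads $p\leq a(s-1)/s+1$, which for any prescribed $p<\infty$ is satisfied by choosing $a$ finite but sufficiently large (permissible since $g\in L^a(D)$ for all such $a$). This delivers boundedness of the scalar operator, and hence of $\mathbf{B}^{bD}_q\colon L^p_q(bD)\to L^p_{0,q}(D)$.

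\textbf{Main obstacle.} The delicate point is the endpoint $t=1$: the boundary integral $\int_{bD}K\,dS$ is not uniformly bounded but diverges logarithmically as $z\to bD$, so $g$ lies merely in $L^a(D)$ for finite $a$. This is precisely why a naive Schur-style interpolation between $L^1$ and $L^\infty$ fails at the endpoint $p=\infty$ and why the refined Young inequality of Theorem~\ref{thm:young2} is essential; as a side effect, the resulting constant depends on $p$ and degenerates as $p\to\infty$, consistent with the restriction $p<\infty$ in the statement.
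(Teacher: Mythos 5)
Your proof is correct and follows essentially the same route as the paper's: the same pointwise kernel bound $\|B_{nq}\|\lesssim\|\zeta-z\|^{-(2n-1)}$, the same choices $t=1$, $s\in(1,\tfrac{2n}{2n-1})$, $b=\infty$, the same logarithmic estimate $g(z)\lesssim 1+|\log\delta(z)|$ (which the paper outsources to \cite{Rp2}, Lemma 3.3.1, while you sketch the local graph computation), and the same invocation of Theorem~\ref{thm:young2} with $a$ taken large enough to accommodate any prescribed $p<\infty$. There is no substantive difference in strategy.
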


\begin{proof}
We will apply Theorem \ref{thm:young2} to the operator ${\bf B}^{bD}_q$.
So, let $X=bD$, $Y=D$ and
$$|K(x,y)|=|B_{nq}(x,y)| \leq \frac{A}{|x-y|^{2n-1}},$$
where $A>0$ is a constant that depends only on $D$, $q$ and $n$.
We choose $t=1$. It is not hard to prove that there are constants $C_0(D)>0$
and $C_1(D)>0$ such that
$$\int_X |K(x,y)|^t d\mu(x) \leq C_0(D) + C_1(D) |\log \delta(y)| =: g(y),$$
where 
$$\delta(y):=\dist (y, bD).$$
For a proof, we refer to \cite{Rp2}, Lemma 3.3.1.
It is easy to see that $|g|^a$ is integrable over $Y=D$ for all powers $1\leq a< \infty$.
So, we remark that $g\in L^a(Y)$ for all $1\leq a<\infty$ (cf. \cite{Rp2}, Lemma 3.3.3).
Now, choose $s>1$ such that
$$1=t < s < \frac{2n}{2n-1}.$$
Then
$$ h(x) := \int_{Y} |K(x,y)|^s d\nu(y)$$
is uniformly bounded (independent of $x\in X$).
Hence $h\in L^\infty(X)$.
So, the assumptions of Theorem \ref{thm:young2} are fulfilled for
$X=bD$, $Y=D$, ${\bf T}={\bf B}^{bD}_q$, $1=t<s$, $h\in L^\infty(X)$, i.e. $b=\infty$,
and $g\in L^a(Y)$ for all $1\leq a<\infty$.
We conclude that ${\bf B}^{bD}_q$ defines a bounded linear operator
${\bf B}^{bD}_q: L^p_q(bD)\rightarrow L^r_{0,q}(D)$
for all $1\leq p,r <\infty$ such that
$$\frac{1}{r}=\frac{1}{p}+\frac{1}{t}-1=\frac{1}{p}.$$
\end{proof}

We have now provided all the tools that are needed to derive the
Bochner-Martinelli-Koppelman formula for $L^p$-forms with weak $\dq$-boundary
values as an application of Friedrichs' extension theorem with boundary values.
So, let \linebreak $D\subset\subset \C^n$ be a bounded domain with smooth boundary, $1\leq r, p <\infty$,\linebreak
$f\in L^p_{0,q}(D)$ with $\dq f\in L^r_{0,q+1}(D)$ in the sense of distributions and assume
that $f$ has weak $\dq$-boundary values $f_b\in L^p_q(bD)$ according to Definition \ref{defn:dq}.

Then, by Theorem \ref{thm:main2}, it follows that
there exists a sequence $\{f_\epsilon\}$ in $C^\infty_{0,q}(\overline{D})$ such that
\begin{eqnarray*}
\lim_{\epsilon\rightarrow 0} f_\epsilon &=& f\ \mbox{ in }\ L^1_{0,q}(D),\\
\lim_{\epsilon\rightarrow 0} \dq f_\epsilon &=& \dq f\ \mbox{ in }\ L^1_{0,q+1}(D),
\end{eqnarray*}
and
\begin{eqnarray}\label{eq:phiX}
\lim_{\epsilon\rightarrow 0} f_\epsilon|_{bD}\wedge\iota^*(\phi)
=\lim_{\epsilon\rightarrow 0} \iota^*(f_\epsilon\wedge\phi)
=f_b\wedge\iota^*(\phi)\ \mbox{ in }\ L^1_{2n-1}(bD)
\end{eqnarray}
for all $\phi\in C^\infty_{n,n-q-1}(\o{D})$,
where $\iota: bD \rightarrow \C^n$ denotes the embedding.
In the following, we will simply write $f_\epsilon$ instead of $f_\epsilon|_{bD}$.\hfill\vspace{4mm}\linebreak
Now, the classical BMK formula, Theorem \ref{thm:bmk-formel}, implies:
$$f_\epsilon(z)={\bf B}^{bD}_q f_\epsilon(z)-{\bf B}^D_q(\dq f_\epsilon)(z)-\dq_z {\bf B}^D_{q-1}f_\epsilon(z)$$
for all $z\in D$, which we permute to:
\begin{eqnarray}\label{eq:bmke}
\dq_z {\bf B}^D_{q-1}f_\epsilon(z)={\bf B}^{bD}_q f_\epsilon(z)-{\bf B}^D_q(\dq f_\epsilon)(z)-f_\epsilon(z).
\end{eqnarray}\linebreak
By Lemma \ref{lem:bmkreg1} and Lemma \ref{lem:bmkreg2}, we know that the applications
\begin{eqnarray*}
{\bf B}^{bD}_q: L^1_q(bD) &\rightarrow& L^1_{0,q}(D),\\
{\bf B}^D_q: L^1_{0,q+1}(D) &\rightarrow& L^1_{0,q}(D)
\end{eqnarray*}
are continuous. Hence, the right hand side of \eqref{eq:bmke} converges in $L^1_{0,q}(D)$ to a form
\begin{eqnarray}\label{eq:g}
G={\bf B}^{bD}_q f_b-{\bf B}^D_q(\dq f)-f\ \ \ \in L^1_{0,q}(D).
\end{eqnarray}
To see this, note that the Bochner-Martinelli-Koppelman kernel $B_{nq}(\zeta,z)$
is a $(n,n-q-1)$-form in $\zeta$. So, \eqref{eq:phiX} can be used.\hfill\vspace{2mm}\linebreak
Since
\begin{eqnarray*}
\lim_{\epsilon\rightarrow 0} {\bf B}^D_{q-1}f_\epsilon &=& {\bf B}^D_{q-1} f\ \mbox{ in }\ L^1_{0,q-1}(D),\\
\lim_{\epsilon\rightarrow 0} \dq_z {\bf B}^D_{q-1}f_\epsilon &=& G\ \mbox{ in }\ L^1_{0,q}(D),
\end{eqnarray*}
it follows that $G$
actually is the $\dq$-derivate in the sense of distributions:
$$G=\dq_z {\bf B}^D_{q-1} f.$$
Applying Lemma \ref{lem:bmkreg1} and Lemma \ref{lem:bmkreg2} again, we observe that
\begin{eqnarray*}
{\bf B}^{bD}_q f_b &\in& L^p_{0,q}(D),\\
{\bf B}^D_q(\dq f) &\in& L^r_{0,q}(D).
\end{eqnarray*}
So, the right hand side of \eqref{eq:g}, and therefore $G$, is in $L^r_{0,q}(D) \cap L^p_{0,q}(D)$.

\newpage
We summarize:
\begin{thm}{\bf (BMK formula for $L^p$-forms)}\label{thm:bmklp}
Let $D\subset\subset \C^n$ be a bounded domain with smooth boundary and $0\leq q\leq n$. 
Moreover, let $1\leq r,p < \infty$ and
$f\in L^p_{0,q}(D)$
with
$\dq f\in L^r_{0,q+1}(D)$,
such that $f$ has weak $\dq$-boundary values $f_b\in L^p_q(bD)$.
Then
\begin{eqnarray*}
{\bf B}^{bD}_q f_b &\in& L^p_{0,q}(D),\\
{\bf B}^D_q(\dq f) &\in& L^r_{0,q}(D),\\
{\bf B}^D_{q-1}f &\in& L^p_{0,q-1}(D)\cap Dom(\dq),\\
\dq {\bf B}^D_{q-1}f &\in& L^r_{0,q}(D) \cap L^p_{0,q}(D),
\end{eqnarray*}
and
\begin{eqnarray*}\label{eq:bmklp}
f(z)={\bf B}^{bD}_q f_b(z)-{\bf B}^D_q(\dq f)(z)-\dq_z {\bf B}^D_{q-1}f(z)
\end{eqnarray*}
for almost all $z\in D$.
\end{thm}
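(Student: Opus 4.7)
The strategy is to reduce to the smooth case via Friedrichs' extension lemma with boundary values and then pass to the limit in the classical BMK formula, using the continuity of the Bochner--Martinelli--Koppelman operators established in Lemma \ref{lem:bmkreg1} and Lemma \ref{lem:bmkreg2}.

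First, I would invoke Theorem \ref{thm:main2}: since $f\in L^p_{0,q}(D)$ with $\dq f\in L^r_{0,q+1}(D)$ has weak $\dq$-boundary values $f_b\in L^p_q(bD)$, there exists a sequence $\{f_\epsilon\}\subset C^\infty_{0,q}(\o{D})$ with $f_\epsilon\to f$ in $L^p_{0,q}(D)$, $\dq f_\epsilon\to \dq f$ in $L^{\min(p,r)}_{0,q+1}(D)$ (since $D$ is bounded and $f,\dq f\in L^1$, we in any case obtain $L^1$-convergence, which is what we truly need here), and $\iota^*(f_\epsilon\wedge\phi)\to f_b\wedge\iota^*(\phi)$ in $L^1_{2n-1}(bD)$ for every $\phi\in C^\infty_{n,n-q-1}(\o{D})$. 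For each smooth $f_\epsilon$, the classical BMK formula (Theorem \ref{thm:bmk-formel}) rearranges as
\begin{equation*}
\dq_z {\bf B}^D_{q-1} f_\epsilon(z) = {\bf B}^{bD}_q f_\epsilon(z) - {\bf B}^D_q(\dq f_\epsilon)(z) - f_\epsilon(z).
\end{equation*}

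The next step is to pass to the limit $\epsilon\to 0$ in each term. By Lemma \ref{lem:bmkreg1}, ${\bf B}^D_q(\dq f_\epsilon)\to {\bf B}^D_q(\dq f)$ in $L^1_{0,q}(D)$ and ${\bf B}^D_{q-1}f_\epsilon\to {\bf B}^D_{q-1}f$ in $L^1_{0,q-1}(D)$. The key point for the boundary term is that the Bochner--Martinelli--Koppelman kernel $B_{nq}(\zeta,z)$ is, for each fixed $z\in D$, an $(n,n-q-1)$-form in $\zeta$ that is smooth in a neighborhood of $bD$. Lemma \ref{lem:bmkreg2} gives continuity of ${\bf B}^{bD}_q: L^1_q(bD)\to L^1_{0,q}(D)$, so the boundary-approximation statement in Theorem \ref{thm:main2} yields ${\bf B}^{bD}_q f_\epsilon\to {\bf B}^{bD}_q f_b$ in $L^1_{0,q}(D)$. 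Consequently the right-hand side converges in $L^1_{0,q}(D)$ to
\begin{equation*}
G := {\bf B}^{bD}_q f_b - {\bf B}^D_q(\dq f) - f.
\end{equation*}

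To identify $G$ with $\dq_z {\bf B}^D_{q-1}f$ in the sense of distributions, I would test against a form $\chi\in C^\infty_{n,n-q-1}(D)$ compactly supported in $D$: the distributional definition of $\dq$ together with the $L^1$-convergence ${\bf B}^D_{q-1}f_\epsilon\to {\bf B}^D_{q-1}f$ and $\dq_z{\bf B}^D_{q-1}f_\epsilon\to G$ shows that $\dq_z{\bf B}^D_{q-1}f = G$ as distributions, hence as $L^1$-forms. This establishes the pointwise identity in the statement for a.e.\ $z\in D$.

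Finally, the regularity assertions follow by reapplying Lemma \ref{lem:bmkreg1} and Lemma \ref{lem:bmkreg2} with the sharp exponents: Lemma \ref{lem:bmkreg2} gives ${\bf B}^{bD}_q f_b\in L^p_{0,q}(D)$; Lemma \ref{lem:bmkreg1} gives ${\bf B}^D_q(\dq f)\in L^r_{0,q}(D)$ and ${\bf B}^D_{q-1}f\in L^p_{0,q-1}(D)$; combining these with the identity for $G$ shows $\dq {\bf B}^D_{q-1}f \in L^r_{0,q}(D)\cap L^p_{0,q}(D)$. The main obstacle I anticipate is ensuring that the boundary convergence provided by Theorem \ref{thm:main2} is compatible with the $\zeta$-dependence of $B_{nq}(\zeta,z)$ uniformly enough to justify the limit of ${\bf B}^{bD}_q f_\epsilon$; this is resolved precisely because $B_{nq}(\cdot,z)$ is an $(n,n-q-1)$-form smooth near $bD$ for each $z\in D$ and because Lemma \ref{lem:bmkreg2} provides the requisite $L^1$--$L^1$ operator bound.
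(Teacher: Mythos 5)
Your proposal is correct and follows essentially the same route as the paper: reduce to the classical BMK formula via Theorem~\ref{thm:main2}, rearrange to isolate $\dq_z {\bf B}^D_{q-1}f_\epsilon$, pass to the limit in $L^1$ using the operator bounds of Lemmas~\ref{lem:bmkreg1}--\ref{lem:bmkreg2} (noting that $B_{nq}(\cdot,z)$ is an $(n,n-q-1)$-form in $\zeta$ so the boundary-approximation statement applies), identify the limit with the distributional $\dq$ of ${\bf B}^D_{q-1}f$, and then reapply the two lemmas with the sharp exponents to get the stated regularity.
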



\end{document}